\documentclass[12pt]{amsart}

\linespread{1.22}
\usepackage{setspace}
\usepackage{geometry}
\geometry{a4paper,top=3cm,bottom=3cm,left=2.5cm,right=2.5cm}
\hyphenpenalty=5000
\tolerance=1000
\usepackage{todonotes}
\usepackage{amsfonts, amssymb, amscd}
\numberwithin{equation}{section}
\usepackage[symbol]{footmisc}

\usepackage{bm}
\usepackage{verbatim}
\usepackage{mathrsfs}
\usepackage{graphicx}
\usepackage{tikz-cd}
\usepackage{subcaption}
\usepackage{listings}
\usepackage{subfiles}
\usepackage[toc,page]{appendix}
\usepackage{mathtools}
\usepackage{comment}
\usepackage{enumerate}
\usepackage{enumitem}
\usepackage[all]{xy}
\usepackage{graphicx}
\graphicspath{{images/}}
\usepackage{appendix}
\usepackage[colorlinks=true, citecolor=green, linkcolor=blue, filecolor=magenta, urlcolor=red,backref=page]{hyperref}

\usepackage{pgfplots}
\pgfplotsset{compat=1.17}

\lstset{
	basicstyle=\ttfamily,
	columns=fullflexible,
	frame=single,
	breaklines=true,
	postbreak=\mbox{\textcolor{red}{$\hookrightarrow$}\space},
}

\newcommand{\bQ}{\mathbb{Q}}
\newcommand{\bR}{\mathbb{R}}
\newcommand{\bZ}{\mathbb{Z}}

\newcommand{\bN}{\mathbb{N}}
\newcommand{\bP}{\mathbb{P}}

\newcommand{\cA}{\mathcal{A}}

\newcommand{\cE}{\mathcal{E}}
\newcommand{\cF}{\mathcal{F}}
\newcommand{\cG}{\mathcal{G}}
\newcommand{\cH}{\mathcal{H}}
\newcommand{\cI}{\mathcal{I}}

\newcommand{\cL}{\mathcal{L}}
\newcommand{\cM}{\mathcal{M}}
\newcommand{\cN}{\mathcal{N}}
\newcommand{\cO}{\mathcal{O}}
\newcommand{\cP}{\mathcal{P}}
\newcommand{\cQ}{\mathcal{Q}}

\newcommand{\cU}{\mathcal{U}}

\newcommand{\cX}{\mathcal{X}}
\newcommand{\cY}{\mathcal{Y}}


\newcommand{\qlin}{\sim_{\mathbb{Q}}}

\newcommand{\Pic}{\mathrm{Pic}}

\newcommand{\Supp}{\mathrm{Supp}}

\newcommand{\vol}{\operatorname{vol}}

\newcommand{\mult}{\operatorname{mult}}
\theoremstyle{plain} 
\newtheorem{thm}{Theorem}[section] 
\newtheorem{lemma}[thm]{Lemma}
\newtheorem{prop}[thm]{Proposition}
\newtheorem{cor}[thm]{Corollary}
\theoremstyle{definition} 
\newtheorem{defn}[thm]{Definition} 

\newtheorem{exam}[thm]{Example}

 \newtheorem{rem}[thm]{Remark}

\begin{document}
	
	\title{Failure of  Boundedness for Generalised Log Canonical Surfaces}
	
	\author{Christopher Hacon and Xiaowei Jiang}
    \address{Department of Mathematics, University of Utah, 155 S 1400 E, Salt Lake City, Utah, 84112, USA}
\email{hacon@math.utah.edu}
	
	\address{Yau Mathematical Sciences Center, Tsinghua University, Hai Dian District, Beijing, 100084, China}
	\email{jxw20@mails.tsinghua.edu.cn}
	\date{\today}
	\subjclass[2020]{14A20, 14E30, 14J10, 14J17}
	\keywords{boundedness, genelarised pairs,  algebraic spaces}
	
	\thanks{}

	\begin{abstract}
		In this paper,
        we construct counterexamples to the boundedness of generalised log canonical models of surfaces with fixed appropriate invariants, where the underlying varieties can have arbitrary Kodaira dimension. 
        This answers a question of Birkar and the first author. 
		
	\end{abstract}
	
	\maketitle
	\tableofcontents
	\section{Introduction}
	Throughout this paper,  we  work over an algebraically closed field $k$ of characteristic zero.
	\vspace*{10pt}

According to the minimal model conjecture and the abundance conjecture, any variety $Y$ with mild singularities is birational to a normal projective variety $X$ such that either $X$ admits a Mori fiber space $X \to Z$, or $K_X$ is semiample. Classification of such $X$  means construction of the corresponding moduli spaces. Usually, the first step to construct moduli spaces is to show that, after fixing certain invariants, our varieties belong to a bounded family.

When $K_X$ is semiample, there exists a contraction $f: X \to Z$ to a normal variety $Z$.  The canonical bundle formula
$$ K_X \sim_{\mathbb{Q}} f^*(K_Z + B_Z + M_Z) $$
shows that the structure of $(Z, B_Z + M_Z)$ plays a fundamental role in understanding the geometry of $X$.
We can then regard $(Z, B_Z + M_Z)$ as a \textit{generalised pair} with ample $K_Z + B_Z + M_Z$, that is, a \textit{generalised log canonical (lc) model}.
We refer the reader to $\S$\ref{S:g-pair} for  background on  generalised pairs and their singularities.
In this paper, we study the boundedness of generalised lc models. 
For the definition of boundedness for generalised  pairs, see $\S$\ref{S:bdd family}.

\begin{defn}

        Fix $d\in \mathbb{N}$, $\Phi\subset \bQ^{\ge 0}$ a DCC set, and $v\in \bQ^{>0}$. 
Let $\mathcal{F}_{glc}(d,\Phi,v)$ be the set of projective generalised pairs $(X,B+M)$ 
with data $X'\overset{\phi}\to X$ and $M'$ where 
\begin{itemize}
\item $(X,B+M)$ is generalised lc of dimension $d$, 
\item the coefficients of $B$ are in $\Phi$, 
\item $M'=\sum \mu_iM_i'$ where $M_i'$ are nef Cartier and $\mu_i\in \Phi$,  
\item $K_X+B+M$ is ample, and 
\item  
$
\vol(K_X+B+M)=v.
$ 
\end{itemize}
    
\end{defn}

When $M=0$, that is, when $(X,B)$ is a usual pair, $\mathcal{F}_{glc}(d,\Phi,v)$ forms a bounded family by \cite{haconBirationalAutomorphismsVarieties2013,haconACCLogCanonical2014,haconBoundednessModuliVarieties2018}, which is crucial for the construction of moduli spaces of varieties of general type \cite{kollarFamiliesVarietiesGeneral2023}.

When $(X,B+M)$ is generalised klt, boundedness is known by \cite{birkarBoundednessVolumeGeneralised2021} and has applications to studying the (birational) boundedness and moduli of klt good minimal models \cite{jiaoBoundednessPolarizedCalabiYau2022,zhuBoundednessStableMinimal2025,jiangBoundednessModuliTraditional2023}.

Although $\mathcal{F}_{glc}(d,\Phi,v)$ is log birationally bounded by \cite[Proposition 5.2]{birkarBoundednessVolumeGeneralised2021}, Birkar and  the first author construct an unexpected counterexample to its boundedness for $d\geq 3$, see \cite[\S 5.3]{birkarVariationsGeneralisedPairs2022}. Nevertheless,  the generalised pairs given by the canonical bundle formula are bounded under certain conditions \cite[Theorem 1.3]{birkarVariationsGeneralisedPairs2022}, which plays a key role in the study of boundedness and moduli of slc good minimal models \cite{birkarModuliAlgebraicVarieties2022}.

It is clear that  $\mathcal{F}_{glc}(1, \Phi, v)$  forms a bounded family, and it is natural to ask whether  $\mathcal{F}_{glc}(2, \Phi, v)$  is bounded or not \cite[Question 5.1]{birkarVariationsGeneralisedPairs2022}. In \cite{filipazziBoundednessLogCanonical2018}, Filipazzi shows that $\mathcal{F}_{glc}(2, \Phi, v)$ is bounded under the assumption that the Cartier index of $M$ (not that of $M'$) is bounded.
In this paper, we will construct examples showing that $\mathcal{F}_{glc}(2, \Phi, v)$ is not bounded in general, where $X_i\in \mathcal{F}_{glc}(2, \Phi, v)$ can have arbitrary Kodaira dimension. 

\begin{thm}\label{thm: unbdd}
Fix $\kappa\in\{-\infty,0,1,2\}$. Let $\cP_{\kappa}\subset\cF_{glc}(2,\Phi,v)$ be the subset of generalised pairs such that $\kappa(X)=\kappa$ for every $(X,B+M)\in \cP_{\kappa}$.
Then $\cP_{\kappa}$ is not bounded.
\end{thm}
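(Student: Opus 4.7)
The plan is to construct, for each $\kappa \in \{-\infty, 0, 1, 2\}$, an infinite sequence $(X_n, B_n + M_n) \in \cP_\kappa$ that cannot fit in a bounded family. Unboundedness will be certified by a single numerical invariant: the Cartier index of the Weil divisor $M_n$ at a singular point of $X_n$. In any bounded family of generalised pairs this Cartier index is uniformly bounded---the relative moduli part is a $\bQ$-Cartier divisor on the total space, so some fixed multiple is Cartier fibrewise---so it suffices to produce examples with the Cartier index of $M_n$ tending to infinity with $n$.

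The common building block is a surface $X_n$ with a single Du Val $A_{n-1}$-singularity at a point $p$; the local class group at $p$ is $\bZ/n$, so there exist Weil divisors on $X_n$ of Cartier index $n$ at $p$. To realize $X_n$ with prescribed Kodaira dimension, I would start from a smooth projective surface $Y_\kappa$ with $\kappa(Y_\kappa) = \kappa$: a ruled surface for $\kappa = -\infty$, an abelian surface for $\kappa = 0$, a properly elliptic surface for $\kappa = 1$, and a minimal surface of general type for $\kappa = 2$. Iterated blow-ups of $Y_\kappa$ at points on a $(-1)$-curve create a chain of $(n-1)$ rational $(-2)$-curves $E_1, \dots, E_{n-1}$ on a blow-up $\widetilde{Y}_\kappa$; contracting this chain produces $X_n$ with the desired $A_{n-1}$-singularity at $p$, and Kodaira dimension is preserved throughout. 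Take $M_n'$ to be a nef Cartier divisor on the minimal resolution $X_n' \to X_n$ whose strict transform meets only the extremal curve $E_1$ and does so transversally at a single point, so that $\phi_{n*} M_n'$ has local class a generator of $\bZ/n$ and Cartier index $n$ at $p$. Take $B_n$ a $\bQ$-divisor with coefficients in a fixed DCC set $\Phi$ supported away from $p$. Since $A_{n-1}$ is Du Val, all discrepancies along the exceptional chain vanish, and the generalised boundary on the resolution has coefficient $(n-j)/n < 1$ at each $E_j$, so the pair is genuinely glc.

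The main obstacle is to arrange $K_{X_n} + B_n + M_n$ to be ample with exact volume $v$ for every $n$, using only coefficients from a common DCC set $\Phi$. This amounts to a Diophantine condition on the intersection numbers of $B_n$ and $M_n$ against an ample class; one can expect to satisfy it only along suitable arithmetic subsequences (for instance $n$ ranging over squares when $v = 1$) and by judicious choice of components of $B_n$ in classes adapted to $n$. For $\kappa \in \{1, 2\}$ a further bookkeeping step tracks how the iterated blow-ups affect global intersection numbers on $\widetilde{Y}_\kappa$, and one must verify that an effective $B_n$ with coefficients in a common $\Phi$ and of the required class exists; this is a finite-dimensional linear-algebra check in each case, but arranging that it holds uniformly in $n$ (with the \emph{same} DCC set $\Phi$) is the subtle point.
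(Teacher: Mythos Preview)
Your approach has a fatal gap: you build the singularity as an $A_{n-1}$ Du Val point, but Du Val singularities are \emph{rational}, and Theorem~\ref{thm:bdd}(1) of the paper shows that any subset of $\cF_{glc}(2,\Phi,v)$ whose underlying surfaces have only rational singularities is automatically bounded. So no sequence of the type you describe can be unbounded, regardless of how the Cartier index of $M_n$ behaves. The paper is explicit about this: ``Since boundedness holds when $X$ has only rational singularities by Theorem~\ref{thm:bdd}, in our examples, $X$ has elliptic singularities.'' Concretely, in the rational case the numerical pullback of $M$ to the bounded resolution is computed intersection-theoretically from data that deforms in the family, and rationality forces the resulting $\bQ$-divisor to be Cartier of bounded index on $X$ (this is exactly the argument in the proof of Theorem~\ref{thm:bdd}(1)). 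Your heuristic that ``in any bounded family the Cartier index of $M_n$ is uniformly bounded'' is thus true for a more delicate reason than you state, and that same reason destroys your construction.

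The paper's constructions instead contract a smooth elliptic curve of negative self-intersection (the strict transform of a plane cubic after blowing up $\ge 10$ points on it, or a section of a ruled surface over an elliptic curve), producing a simple elliptic (non-rational) singularity. The unboundedness is then extracted from the arithmetic of $\Pic^0$ of that elliptic curve: the torsion order of a carefully chosen point controls the smallest multiple of $M_{Y_i}$ that restricts trivially to the contracted curve, hence the smallest multiple that is base point free, hence the Cartier index of $M_i$ downstairs. No Diophantine juggling of coefficients in $\Phi$ is needed; the volume and the coefficient set are fixed once and for all (e.g.\ $\Phi=\{0,1\}$, $v=22$ for $\kappa=0$), and the unbounded parameter is purely the torsion order on the elliptic curve. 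Proving that the underlying varieties $X_i$ themselves are unbounded requires a further argument comparing the projective members against the non-projective algebraic-space members of a one-parameter family (Lemma~\ref{lem:analytic family} and Theorem~\ref{thm:P_0 unbdd}).
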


In \cite{birkarVariationsGeneralisedPairs2022}, Birkar and the first author construct a set of generalised pairs $(X_i, B_i + M_i)$ in $\mathcal{F}_{glc}(3, \Phi, v)$ such that each $X_i$ is an lc Fano variety and the Cartier index of $K_{X_i}$ is unbounded. However, after a flip, $X_i$ becomes klt and bounded, while the failure of boundedness in dimension 2 implies that we cannot expect $\mathcal{F}_{glc}(d, \Phi, v)$ to be bounded in codimension one. On the other hand, for generalised lc surfaces $(X_i, B_i + M_i)$ in $\mathcal{F}_{glc}(2, \Phi, v)$, it can be shown that the Cartier index of $K_{X_i}$ is bounded. Furthermore, if $X_i$ is an lc Fano surface, then $X_i$  is bounded.

\begin{thm}\label{thm:bdd}
Let $\cQ\subset \cF_{glc}(2,\Phi,v)$ be a subset of generalised pairs. Then $\cQ$ is bounded if one of the following holds for every $(X,B+M)\in \cQ$:
\begin{enumerate}
\item $X$ has rational singularities,
\item $X\to Z$ is a minimal ruled fibration onto a nonsingular curve $Z$,
\item $-K_{X}$ is ample,
\item $K_{X}$ is ample.
\end{enumerate}
\end{thm}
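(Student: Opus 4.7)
The overall strategy is to reduce each case to Filipazzi's boundedness theorem \cite{filipazziBoundednessLogCanonical2018}, which asserts that $\cF_{glc}(2,\Phi,v)$ is bounded once the Cartier index of $M$ on $X$ is uniformly bounded. Each case therefore splits into two tasks: first bound the underlying surface $X$ in an algebraic family, and then use that to bound the Cartier index of $M$ in a uniform way.

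A preliminary observation is that for any $(X,B+M)\in \cF_{glc}(2,\Phi,v)$, the underlying variety $X$ is itself log canonical. Indeed, on a log resolution $\phi\colon X'\to X$ write $K_{X'}+B'+M'=\phi^*(K_X+B+M)$; since $\phi^*\phi_*M'-M'$ is effective and $\phi$-exceptional by the negativity lemma, and since the exceptional coefficients of $B'$ are at most one, a short computation rewriting $K_{X'}-\phi^*K_X$ shows that all discrepancies of $X$ at exceptional divisors are $\ge -1$.

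For the boundedness of $X$, in case $(4)$ we have $\vol(K_X)\le \vol(K_X+B+M)=v$, so $X$ is an lc surface of general type with bounded volume, bounded by Alexeev's theorem. In case $(3)$, $X$ is an lc Fano surface, again bounded by Alexeev. In case $(1)$, an lc surface singularity is rational if and only if it is klt, so $X$ is klt; combining the log birational boundedness of $\cF_{glc}(2,\Phi,v)$ from \cite[Proposition 5.2]{birkarVariationsGeneralisedPairs2022} with the $K_X$-MMP run on the log bounded ambient family yields a bounded family containing $X$. In case $(2)$, $X=\bP_Z(\cE)$ for a normalised rank two bundle on a smooth curve $Z$; using the explicit Mori cone of a ruled surface together with the equations $(K_X+B+M)^2=v$ and the ampleness of $K_X+B+M$, we bound $g(Z)$, the invariant $e(\cE)$ and the numerical class of $B+M$, hence $X$ lies in a bounded family.

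For the Cartier index of $M$, in cases $(1)$ and $(2)$ every singularity of $X$ is rational, so $X$ is $\bQ$-factorial with finite local class groups of bounded order in the bounded family; hence every Weil divisor on $X$, and in particular $M$, has uniformly bounded Cartier index. In cases $(3)$ and $(4)$, $X$ may have non-rational lc singularities, but $K_X$ has bounded Cartier index in the bounded family, and $K_X+B+M$, being ample of fixed volume on a bounded $X$, varies in a bounded set of $\bQ$-Cartier classes; since the coefficients of $B$ lie in the DCC set $\Phi$, the Cartier index of $M=(K_X+B+M)-K_X-B$ is forced to be bounded as well. Filipazzi's theorem then yields the boundedness of $\cQ$.

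The main obstacle is the uniform bound on the Cartier index of $M$ in cases $(3)$ and $(4)$, where $X$ may genuinely possess non-rational lc singularities (simple elliptic or cusp), over which Weil divisors have no a priori uniform Cartier index. We expect to circumvent this by working with the decomposition $M=\sum \mu_iM_i$ and bounding the index of each $M_i$ separately, using that $M_i'$ is nef Cartier on a high model and that the ambient $X$ varies in a bounded family.
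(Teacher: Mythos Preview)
Your reduction to Filipazzi is the right move for cases (1) and (2), and your treatment of (1) is close to the paper's: both bound the Cartier index of $M$ via the birationally bounded model $(\overline X,\overline\Sigma)$ of \cite{birkarBoundednessVolumeGeneralised2021} and the rational-singularity hypothesis, then invoke \cite{filipazziBoundednessLogCanonical2018}. The paper's argument is more explicit---it solves for the pullback coefficients of $M$ along the exceptional curves as the unique solution of a linear system determined by the bounded family, then uses \cite[Lemma~4.13]{kollarBirationalGeometryAlgebraic1998} to push Cartierness down through a rational singularity---but the underlying idea is the same.

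In case (2), however, your identification $X=\bP_Z(\cE)$ is unjustified. The paper's definition of \emph{minimal ruled fibration} only asks that the general fibre be a smooth rational curve and that no fibre contain a $(-1)$-curve; such an $X$ can be singular and need not be a projective bundle. The paper instead quotes \cite[Lemma~4.6]{sakaiStructureNormalSurfaces1985} to conclude that $X$ has rational singularities and then reduces to case (1). Your Mori-cone computation presupposes smoothness you have not established.

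In cases (3) and (4) your acknowledged obstacle is real, and the Filipazzi route is the wrong one here. At a simple elliptic singularity the local class group contains $\Pic^0$ of the exceptional elliptic curve, so a Weil divisor whose pullback is Cartier on the resolution can still have arbitrarily large Cartier index on $X$; neither the glc condition nor the decomposition $M=\sum\mu_iM_i$ by itself pins down this torsion, and your sketch does not explain how it would. The paper does \emph{not} bound the Cartier index of $M$ in these cases. Instead it uses the birationally bounded model to determine the dual graphs (hence the analytic types) of the singularities of $X$, which bounds the Cartier index of $K_X$; then $\pm lK_X$ is very ample for a uniform $l$ by effective base-point-freeness for lc pairs, and the two intersection numbers $(\pm lK_X)^2$ and $(\pm lK_X)\cdot(K_X+B+M)$ are computed on $\overline X$, where all the relevant pullback coefficients are determined by the bounded data. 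This verifies the definition of boundedness in \S\ref{S:bdd family} directly, bypassing Filipazzi and the Cartier index of $M$ altogether.
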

By \textit{minimal ruled fibration}, we mean a fibration whose general fiber is a nonsingular rational curve, with no exceptional curves of the first kind contained in any fiber. 
While boundedness in the Fano case fails in dimension 3 by \cite{birkarVariationsGeneralisedPairs2022}, boundedness in the canonically polarized case holds in any dimension, see Corollary \ref{cor:bdd can model}.

    

\subsection*{Acknowledgement}
This work was done when the second
author visited the first author at the University of Utah. The second author would like to thank the University of Utah for its hospitality.
He also thanks his advisor Caucher Birkar for  constant support and encouragement, and Qingyuan Xue  for helpful comments.
The first author  was partially supported by NSF research grants no: DMS-2301374 and by a grant from the Simons Foundation SFI-MPS-MOV-00006719-07.

	\section{Preliminaries}

\subsection{Generalised pairs and singularities}\label{S:g-pair}

We refer reader to \cite{birkarGeneralisedPairsBirational2021} for a survey of  generalised pairs.
	A \textit{generalised pair} $(X,B,M)$ consists of
	\begin{itemize}
		\item a normal variety $X$,
		\item an effective $\bR$-divisor $B$ on $X$, and
		\item a b-$\bR$-Cartier b-divisor $M$ over $X$, represented by a projective birational morphism $f: X'\to X$ and an $\bR$-Cartier $\bR$-divisor $M'$ on $X'$
	such that $M'$ is nef  and $K_X+B+M$ is $\bR$-Cartier, where $M:=f_*M'$.  
 \end{itemize}
 We will often refer to such a generalised pair by saying that $(X, B + M)$ is a generalised pair with data $X'\to X$ and $M'$.

	Let $D$ be a prime divisor over $X$. Replace $X'$ with a log resolution of $(X,B)$ such that  $D$ is a prime divisor on $X'$. We can write 
	$$K_{X'}+B'+M'=f^*(K_X+B+M).$$ 
	We define the \textit{generalised log discrepancy} of  $D$ to be $a(D,X,B,M)=1-\mult_D B'$.
	
	We say that $(X,B+M)$ is \textit{generalised klt} (resp. \textit{generalised lc}) if $a(D,X,B,M)>0$ (resp. $a(D,X,B,M)\geq0$) for every prime divisor $D$ over $X$.   

\begin{rem}
   If $(X,B+M)$ is a generalised log canonical surface, 
   then the negativity lemma \cite[Lemma 3.39]{kollarBirationalGeometryAlgebraic1998} applied to $M'$ and the numerical pullback of $M$ implies that $(X,B)$ is numerically log canonical.
Thus, $(X,B)$ is log canonical by \cite[Proposition 3.5]{fujinoMinimalModelTheory2012}, and $M$ is $\bQ$-Cartier.
Moreover, by \cite[Remark 3.6]{filipazziBoundednessLogCanonical2018}, $M$ is nef.
Similarly, we can show that $X$ is log canonical.
     
\end{rem}
    
        \subsection{Bounded family}\label{S:bdd family}
A \textit{couple} $(X, D)$ consists of a normal projective variety $X$ and a reduced divisor $D$ on $X$. We do not assume that $K_X + D$ is $\bQ$-Cartier or that $(X,D)$ has nice singularities.

   A set $\cP$ of generalised pairs is said to be a \textit{bounded family} if there is a fixed $r\in \bN$ such that for any $(X,B+M)\in \cP$,  we can find a very ample divisor $A$ on $X$ satisfying
   $$ A^{\dim X} \leq r \quad \text{and} \quad A^{\dim X - 1} \cdot (K_X + B + M) \leq r. $$

The first condition implies that the underlying variety $X$ is bounded. 
If the coefficients of $B$ belong to a DCC set $\Phi$, then the first two conditions imply that $(X, \text{Supp } B)$ belongs to a \textit{log bounded family of couples}. This is equivalent to saying that
there is a quasi-projective scheme $\cX$, a reduced divisor $\cE$ on $\cX$, and a projective morphism  $h: \cX\to T$, 
where $T$ is of ﬁnite type and $\cE$ does not contain any ﬁber, such that for every $(X, B+M) \in\cP$, there is a closed point $t\in T$ and an isomorphism  $f : \cX_t  \to X$ 
such that $\cE_t$ contains the support of $f^{-1}_*B$.

However, since $M$ is not necessarily effective, we cannot control $\text{Supp } M$. In practice, we can only bound the intersection number $A^{\dim X - 1} \cdot M$.

    \subsection{Positivity of line bundles on the blow ups of surfaces}
Firstly, we recall some results on ampleness and very ampleness for blow ups of $\bP^2$.

     \begin{thm}[{\cite[Theorem 2.4]{biancofioreHyperplaneSectionsBlowUps1989}}]\label{thm:arbitrary blow up P2}

    Let $Y \to\bP^2$ be the blow up of $\bP^ 2$ at $r$ distinct points $p_ 1 ,\ldots, p_ r$ with exceptional divisors $E_ 1 ,\ldots,
		E_ r$. 
		Let $H$ denote the pull-back of $\cO_{ \bP^ 2} (1)$. 
        For $d,m_1,\ldots,m_r\in \bZ^{> 0}$,
		 let $L = dH - \sum_{i=1}^{r}m_i E_ i$ be a line bundle on $Y$. 
		 If $$d\geq 1+\sum_{i=1}^rm_i,$$
         then $L$ is very ample.
\end{thm}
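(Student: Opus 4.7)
My plan is to prove that $L$ is very ample by writing it as a sum of a very ample line bundle and a globally generated one, relying on the classical principle that the tensor product of a very ample line bundle with a globally generated one is very ample (concretely, if $A$ is very ample and $B$ is globally generated, the combined map $Y \to \bP^n \times \bP^m$ composed with the Segre embedding gives an embedding corresponding to $A\otimes B$).

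\textbf{Key lemma.} First, I would prove that $cH - \sum k_i E_i$ is globally generated on $Y$ whenever $k_i \geq 0$ and $c \geq \sum k_i$. Writing this as $(c - \sum k_i)H + \sum k_i(H - E_i)$, it suffices to show that $H$ (pullback from $\bP^2$) and each $H - E_i$ are globally generated on $Y$. Sections of $H - E_i$ correspond to effective divisors of lines through $p_i$, and this pencil has no base points on $Y$: at any point $q \in Y$, one can find a line through $p_i$ whose proper transform misses $q$ (picking a line avoiding $p_j$ for $j\neq i$ handles $q\in E_j$, and choosing a different tangent direction handles $q\in E_i$).

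\textbf{Induction on $s = \sum m_i$.} Suppose $s > r$, so some $m_j \geq 2$. Set $n_i = \max(m_i - 1, 1)$, so $n_i \geq 1$ for all $i$ and $\sum n_i < s$ strictly. Let $A = (1 + \sum n_i)H - \sum n_i E_i$; by induction on $s$, $A$ is very ample. Let $B = L - A = (d - 1 - \sum n_i)H - \sum(m_i - n_i)E_i$. The hypothesis $d \geq 1 + s$ rearranges to $d - 1 - \sum n_i \geq \sum(m_i - n_i)$, so $B$ is globally generated by the lemma. Hence $L = A + B$ is very ample.

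\textbf{Base case and main obstacle.} The main obstacle is the base case $s = r$, where all $m_i = 1$ and no proper subdecomposition exists (any smaller choice would have $n_i = 0$ for some $i$, making $A \cdot E_i = 0$ so $A$ is not even ample on $Y$). For this case, I would invoke Reider's theorem: write $L = K_Y + N$ with $N = (d+3)H - 2\sum E_i$, verify that $N$ is nef (using $N \cdot E_i = 2 > 0$ and the genus-formula bound $\sum a_i^2 \leq n^2$ for proper transforms of irreducible plane curves of degree $n$ with multiplicities $a_i$) and that $N^2 = (d+3)^2 - 4r \geq (r+4)^2 - 4r = r^2 + 4r + 16 \geq 10$, and then rule out every destabilizing effective divisor $D$ on $Y$ by checking that the possible numerical classes (with $N \cdot D \leq 2$ and $D^2 \in \{0, -1, -2\}$) cannot occur under the hypothesis $d \geq r + 1$. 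The case analysis of possible $D = aH - \sum b_i E_i$ (realized by the $E_i$, proper transforms of lines through two $p_i$, conics through five $p_i$, and so on) is the technical heart of the argument.
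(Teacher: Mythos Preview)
The paper does not prove this theorem; it is quoted from \cite{biancofioreHyperplaneSectionsBlowUps1989} without argument and then used as a black box. So there is nothing to compare against, and your proposal must be judged on its own.

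Your key lemma and the induction on $s=\sum m_i$ are fine. The gap is in the base case. The bound you invoke, ``$\sum a_i^2\le n^2$'' for an irreducible plane curve of degree $n$ with multiplicities $a_i$ at the $p_i$, is simply false: a line through two of the $p_i$ already gives $\sum a_i^2=2>1=n^2$. What the genus formula actually yields is $\sum a_i(a_i-1)\le (n-1)(n-2)$, which does not imply your inequality. Consequently the nefness of $N=(d+3)H-2\sum E_i$ fails in general. Concretely, take $r\ge 5$ collinear points and $d=r+1$; the proper transform $\ell'$ of the line through them satisfies $N\cdot\ell'=(d+3)-2r=4-r<0$. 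Since Reider's theorem requires $N$ nef, it does not apply, and your base case collapses precisely in the configurations the theorem is meant to cover (the paper even remarks right after the statement that the bound is sharp exactly when all points are collinear).

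The overall architecture can be salvaged, but the base case $L=dH-\sum E_i$ with $d\ge r+1$ needs a different argument---for instance a direct check that $|L|$ separates points and tangent vectors using reducible members built from lines, or an induction on $r$ rather than on $s$. As written, the proposal is incomplete.
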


The bound is sharp and can  be improved only if not all points $p_1, \ldots, p_r$ lie on a line.  
If $d < 1 + \sum_{i=1}^r m_i$, determining whether $L$ is very ample requires studying the positions of the points $p_1, \ldots, p_r$.
    
	\begin{thm}[{\cite[Theorem 2.1]{hanumanthuPositivityLineBundles2017}}]\label{thm:ample on rational}
       		Let $C\subset \bP^2$ be an irreducible and reduced curve of degree $e$. 
		Let $Y \to\bP^2$ be the blow up of $\bP^ 2$ at $r$ distinct smooth points $p_ 1 ,\ldots, p_ r\in C$ with exceptional divisors $E_ 1 ,\ldots,
		E_ r$.  
		Let $H$ denote the pull-back of $\cO_{ \bP^ 2} (1)$.        
    For $d,m_1,\ldots,m_r\in \bZ^{> 0}$,
		 let $L = dH - \sum_{i=1}^{r}m_i E_ i$ be a line bundle on $Y$. 
		 Let $F$  denote the proper transform of $C$ on $Y$.
	If $$L \cdot F =3d-\sum_{i=1}^rm_i> 0$$ 
    and $$d>m_{i_1}+\cdots+m_{i_e}$$
    for any $e$ distinct indices $i_ 1, \ldots , i_ e\in\{1,\ldots,r\}$, then $L$ is ample.
		
	\end{thm}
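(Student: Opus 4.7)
The plan is to verify ampleness via the Nakai--Moishezon criterion on the smooth projective surface $Y$: it suffices to check that $L^2 > 0$ and $L \cdot D > 0$ for every irreducible curve $D \subset Y$. Every irreducible $D \subset Y$ falls into one of three types---an exceptional divisor $E_i$, the strict transform $F$ of $C$, or the strict transform of some other irreducible plane curve $D_0 \neq C$. The first two types are handled immediately: $L \cdot E_i = m_i > 0$, and $L \cdot F > 0$ is part of the hypothesis.

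The main case is the third. Writing $D = nH - \sum a_i E_i$ with $n = \deg D_0$ and $a_i = \mult_{p_i}(D_0) \geq 0$, I apply Bezout's theorem to the distinct irreducible plane curves $D_0$ and $C$: their total intersection number is $D_0 \cdot C = ne$. Since each $p_i$ is smooth on $C$, the local intersection multiplicity at $p_i$ is at least $a_i \cdot \mult_{p_i}(C) = a_i$, so $\sum a_i \leq ne$. Combined with the elementary bound $a_i \leq n$ valid for any irreducible plane curve of degree $n$, I bound $\sum m_i a_i$ by a rearrangement/linear-programming argument: subject to $0 \leq a_i \leq n$ and $\sum a_i \leq ne$, the maximum of $\sum m_i a_i$ is attained by saturating $a_i = n$ at the $e$ indices with the largest values of $m_i$ and setting the remaining $a_i = 0$. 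This yields $\sum m_i a_i \leq n(m_{i_1} + \cdots + m_{i_e}) < nd$ by the top-$e$ hypothesis, and consequently $L \cdot D = nd - \sum m_i a_i > 0$.

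For the self-intersection $L^2 = d^2 - \sum m_i^2$, I sort the $m_i$ in decreasing order. The top-$e$ hypothesis $m_{i_1} + \cdots + m_{i_e} < d$ forces $m_{i_j} < d/j$ for $j \leq e$ and controls $\sum_{\mathrm{top}\,e} m_i^2$ by $(m_{i_1} + \cdots + m_{i_e})^2 < d^2$. For the tail, each $m_i \leq m_{i_e} < d/e$ together with the upper bound on $\sum m_i$ provided by $L \cdot F > 0$ bounds the remaining contribution. A direct algebraic combination of these two pieces gives $\sum m_i^2 < d^2$, hence $L^2 > 0$.

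The main obstacle I foresee is the rearrangement/linear-programming step controlling $\sum m_i a_i$ by the top-$e$ sum; once this is in place, the rest of the Nakai--Moishezon verification is routine, and the $L^2 > 0$ estimate is a small variant of the same combinatorial computation applied to the concrete divisor $F$ in place of an arbitrary $D$.
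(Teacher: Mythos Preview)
The paper does not prove this statement; it is quoted from Hanumanthu's paper as a black box, so there is no proof in the paper to compare against. I will evaluate your attempt on its own merits.

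Your treatment of $L\cdot D>0$ is correct. The three-case split is the right one, and the linear-programming bound in the third case is valid: subject to $0\le a_i\le n$ and $\sum a_i\le ne$, the linear functional $\sum m_ia_i$ is maximised at $a_{i_1}=\cdots=a_{i_e}=n$ for the $e$ largest $m_i$, giving $\sum m_ia_i\le n(m_{i_1}+\cdots+m_{i_e})<nd$. (One caveat: this presumes $r\ge e$. If $r<e$ the second hypothesis is vacuous and the conclusion can fail, e.g.\ $e=3$, $r=2$, $d=5$, $m_1=m_2=3$ gives $L\cdot(H-E_1-E_2)=-1$.)

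The gap is in your $L^2>0$ step. Your sketch yields $\sum_{\text{top }e}m_i^2\le s^2$ with $s=m_{i_1}+\cdots+m_{i_e}<d$, and for the tail $\sum_{i>e}m_i^2<(d/e)\sum_{i>e}m_i<(d/e)(ed-s)=d^2-ds/e$. Adding these gives only $\sum m_i^2<s^2+d^2-ds/e$, and this exceeds $d^2$ whenever $s>d/e$; so the ``direct algebraic combination'' you invoke does not close. Since strictly nef divisors need not be ample, $L^2>0$ genuinely requires a separate argument.

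Here is a clean fix that works in the situation the paper actually uses. Once $L\cdot D>0$ for every irreducible curve, $L$ is nef and hence $L^2\ge 0$. If $L^2=0$, apply Riemann--Roch on the rational surface $Y$: since $(K_Y-L)\cdot H<0$ we have $h^2(L)=0$, so
\[
h^0(L)\ \ge\ \chi(L)\ =\ 1+\tfrac12\bigl(L^2+L\cdot(-K_Y)\bigr)\ =\ 1+\tfrac12\bigl(3d-\textstyle\sum m_i\bigr).
\]
The hypothesis $L\cdot F=3d-\sum m_i>0$ (note the paper's formula is literally $3d$, i.e.\ the case $e=3$) makes the right-hand side at least $1$, so $L$ is linearly equivalent to a nonzero effective divisor $D_{\mathrm{eff}}$, and then $L^2=L\cdot D_{\mathrm{eff}}>0$ by what you already proved --- contradiction. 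For general $e>3$ the inequality $\sum m_i^2<d^2$ needs a finer argument than your sketch supplies.
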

	
	\begin{thm}[{\cite[Theorem 3.6]{hanumanthuPositivityLineBundles2017}}]\label{thm:very ample on rational}
        		Let $C\subset \bP^2$ be an irreducible and reduced curve of degree $e$. 
		Let $Y \to\bP^2$ be the blow up of $\bP^ 2$ at $r$ distinct smooth points $p_ 1 ,\ldots, p_ r\in C$ with exceptional divisors $E_ 1 ,\ldots,
		E_ r$.  
		Let $H$ denote the pull-back of $\cO_{ \bP^ 2} (1)$.        
 For $d,m\in \bZ^{> 0}$, let $L = dH -m \sum_{i=1}^{r} E_ i$ be a line bundle on $Y$.
		If $$(d + 3)e > r(m + 1) \quad \text{and} \quad r \geq e^2 + 2 ,$$ then $L$ is very ample.
		
	\end{thm}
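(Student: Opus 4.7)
The standard tool for very ampleness of a line bundle on a smooth projective surface is Reider's theorem, and this is the approach I would take. Write $L = K_Y + N$ where, since $K_Y = -3H + \sum_{i=1}^r E_i$,
\[
N := L - K_Y = (d+3)H - (m+1)\sum_{i=1}^r E_i.
\]
Reider's theorem guarantees that $K_Y + N = L$ is very ample provided $N$ is nef, $N^2 \geq 10$, and no effective ``obstruction'' divisor $Z$ exists with $N \cdot Z = 0$ and $Z^2 \in \{-1,-2\}$, or $N \cdot Z = 1$ and $Z^2 \in \{-1,0\}$.

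\textbf{Nefness and $N^2$.} To test nefness, I pair $N$ with irreducible curves. Clearly $N \cdot E_i = m+1 > 0$, and $N \cdot F = e(d+3) - r(m+1) > 0$ by the first hypothesis, where $F$ is the strict transform of $C$. Any other irreducible curve $D \subset Y$ is the strict transform of some irreducible plane curve of degree $e' > 0$ passing through the $p_i$ with multiplicities $a_i \geq 0$, so $D \equiv e'H - \sum a_i E_i$. The arithmetic genus bound gives $\sum a_i(a_i - 1) \leq (e'-1)(e'-2)$, and Cauchy--Schwarz gives $\bigl(\sum a_i\bigr)^2 \leq r \sum a_i^2$, since at most $r$ of the $a_i$ are nonzero. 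Together these yield a quadratic bound on $\sum a_i$ in terms of $e'$ and $r$ which, combined with $r \geq e^2 + 2$ and $(d+3)e > r(m+1)$, forces $(d+3)e' \geq (m+1)\sum a_i$, i.e.\ $N \cdot D \geq 0$. For the self-intersection $N^2 = (d+3)^2 - r(m+1)^2$, the first hypothesis gives $(d+3) > r(m+1)/e$, so $N^2 > r(m+1)^2 (r - e^2)/e^2$, and plugging in $r \geq e^2+2$ and $m \geq 1$ (together with a separate check for the small values $e \leq 3$) gives $N^2 \geq 10$.

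\textbf{Reider obstructions.} It remains to rule out an effective divisor $Z$ with $N \cdot Z \leq 1$ and the stipulated small self-intersection. Any irreducible component of such $Z$ is either an $E_i$, the curve $F$, or the strict transform of a plane curve; the same genus-plus-Cauchy--Schwarz analysis used for nefness excludes each possibility. The strict inequality in $(d+3)e > r(m+1)$ handles the case $N \cdot Z = 0$, and the slack provided by $r \geq e^2+2$ handles $N \cdot Z = 1$.

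\textbf{Main obstacle.} The bulk of the work lies in the last step: one must exhaustively exclude every possible obstruction divisor, most delicately those corresponding to plane curves of moderately high degree passing through all $r$ points with roughly balanced multiplicities. The precise numerical assumption $r \geq e^2+2$ is exactly what makes the Cauchy--Schwarz inequality sharp enough to close off these extremal cases, so the proof is really a careful balancing of the two hypotheses against the degree-and-multiplicity data of putative obstruction curves.
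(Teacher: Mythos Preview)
The paper does not prove this theorem at all: it is quoted verbatim from \cite{hanumanthuPositivityLineBundles2017} and used as a black box, so there is no ``paper's own proof'' to compare against. Your Reider-theorem outline is the natural approach and is almost certainly what the cited source does.

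That said, your nefness and obstruction arguments take an unnecessarily hard road. You invoke the arithmetic-genus bound and Cauchy--Schwarz to control $\sum a_i$ for an arbitrary irreducible curve $D\equiv e'H-\sum a_iE_i$, but this ignores the one special feature of the setup: the blown-up points all lie on the fixed curve $C$ of degree $e$ and are smooth points of $C$. B\'ezout therefore gives, for any irreducible $D\neq F$, the clean bound $\sum a_i\le ee'$, whence
\[
N\cdot D \;\ge\; (d+3)e' - (m+1)ee' \;=\; e'\bigl((d+3)-e(m+1)\bigr),
\]
and the hypotheses $(d+3)e>r(m+1)$ together with $r\ge e^2+2$ force $(d+3)>e(m+1)$, so $N\cdot D>0$ immediately. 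The same B\'ezout inequality, rather than genus estimates, is what pins down the possible Reider obstruction curves and explains why the numerical hypotheses take the exact form they do. Your genus-plus-Cauchy--Schwarz route does not obviously close (for instance, when $D=F$ itself all $a_i=1$ and the genus bound is vacuous), and it is the B\'ezout constraint coming from $p_i\in C$ that carries the argument. Also, for very ampleness Reider's list of obstructions includes the case $N\cdot Z=2$, $Z^2=0$, which you omitted.
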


We also need the following result on the very ampleness of blow ups of ruled surfaces at arbitrary points.
\begin{thm}[{\cite[Theorem 1.3]{biancofioreHyperplaneSectionsRuled1990}}]\label{thm:positivity on blow up ruled}
Let $W$ be a ruled surface with invariant $d\geq 0$ over a curve $C$ of genus $g$, with a fiber $F$ and a section $C^-$ satisfying $(C^-)^2=-d$.
Let $Y\to W$ be the blow up of $W$ at $r$ distinct points $p_1,\ldots,p_r$ with exceptional divisors $E_1,\ldots,E_r$. For simplicity, we still denote by $C^-$ and $F$ the divisors on $Y$ given by the pullbacks of the divisors $C^-$ and $F$ on $W$, with no confusion.
For $a,b,m_1,\ldots,m_r\in \bZ^{> 0}$, let 
$$L=aC^-+bF-\sum_{i=1}^r m_iE_i$$
be a line bundle on $Y$.
If  
$$b \geq ad + 2g + 1 + \sum_{i=1}^r m_i,$$  
and for any effective curve $C' = F - \sum_{i=1}^r \alpha_i E_i$ with $0 \leq \alpha_i \leq 1$, we have  
$$L \cdot C' = a - \sum_i \alpha_i m_i \geq 1,$$  
then $L$ is very ample.
   
\end{thm}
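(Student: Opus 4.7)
Since this statement is quoted verbatim from \cite{biancofioreHyperplaneSectionsRuled1990}, in practice the authors will invoke it as a black box. Nevertheless, a natural route to the proof is a combination of Nakai--Moishezon for ampleness and a decomposition argument for very ampleness.

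First I would verify that $L$ is ample by checking $L \cdot \Gamma > 0$ for every irreducible curve $\Gamma$ on $Y$. These fall into three classes. (i) An exceptional divisor $E_i$ gives $L \cdot E_i = m_i \geq 1$. (ii) A proper transform of a (possibly reducible) fiber is of the form $F - \sum_i \alpha_i E_i$ with $\alpha_i \in \{0,1\}$, so the second hypothesis gives $L \cdot C' \geq 1$. (iii) For a proper transform of a horizontal curve, one writes its class on $W$ as $\alpha C^- + \beta F$ with $\alpha \geq 1$ and $\beta \geq \alpha d$ (the standard constraint for irreducible curves on a ruled surface); then $L \cdot \Gamma \geq a\beta - \alpha ad + \alpha b - \sum m_i$, and the bound $b \geq ad + 2g + 1 + \sum m_i$ makes this strictly positive.

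Next, for very ampleness I would decompose
\[
L = L_1 + L_2, \qquad L_1 = C^- + (ad + 2g + 1)F - \sum_{i=1}^r m_i E_i, \qquad L_2 = (a-1)C^- + (b - ad - 2g - 1)F.
\]
The divisor $L_2$ pulls back from a nef (indeed globally generated) divisor on $W$, since any $\alpha C^- + \beta F$ on $W$ with $\alpha \geq 0$ and $\beta \geq \alpha d$ is nef. So it suffices to prove very ampleness of $L_1$, i.e.\ the case $a=1$. For $a=1$, I would use the exact sequence $0 \to L_1(-C^-) \to L_1 \to L_1|_{C^-} \to 0$: the restriction to $C^-$ has degree $-d + ad + 2g + 1 - (\#\{p_i \in C^-\}\text{-weighted sum of }m_i) \geq 2g+1$, hence is very ample on the smooth curve $C^-$, while $L_1(-C^-)$ is nef with positive intersection on every fiber transform by the second hypothesis. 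A Kawamata--Viehweg vanishing argument then shows that $L_1$ separates points and tangent vectors on $Y$.

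The main obstacle is the clustering of blown-up points on a single fiber. When many $p_i$ lie on one fiber $F_0$, its proper transform is $F - \sum_{p_i \in F_0} E_i$, and the numerical bound $b \geq ad + 2g + 1 + \sum m_i$ alone does not prevent this from intersecting $L$ non-positively; the second explicit hypothesis $a - \sum_i \alpha_i m_i \geq 1$ is precisely the condition needed to rule this out and cannot be deduced from the first. Carefully handling this configuration, together with the case where several of the $p_i$ lie on $C^-$ itself, is where the technical weight of the Biancofiore proof lies.
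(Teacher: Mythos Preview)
The paper does not prove this theorem; it is quoted from \cite{biancofioreHyperplaneSectionsRuled1990} and used as a black box, exactly as you anticipated in your first sentence. So there is no ``paper's own proof'' to compare against.

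That said, your sketch has a genuine gap in the very ampleness step. Your reduction to the case $a=1$ cannot work: once $r\geq 1$, the curve $F-E_i$ is always effective (it is linearly equivalent to the strict transform of the fibre through $p_i$ plus the other exceptional components over that fibre), and with $a=1$ the second hypothesis would force $1-m_i\geq 1$, i.e.\ $m_i\leq 0$, contradicting $m_i\in\bZ^{>0}$. Thus $L_1=C^-+(ad+2g+1)F-\sum m_iE_i$ never satisfies the hypotheses of the theorem with $a=1$, and you cannot simply invoke the $a=1$ case. Relatedly, your $L_2=(a-1)C^-+(b-ad-2g-1)F$ need not be nef on $W$: nefness of $\alpha C^-+\beta F$ with $d\geq 0$ requires $\beta\geq \alpha d$, i.e.\ $b\geq (2a-1)d+2g+1$, which is not implied by the stated bound $b\geq ad+2g+1+\sum m_i$. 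Even granting nefness, ``very ample $+$ nef'' does not give very ample; you need $L_2$ globally generated, and for that one needs additional positivity on the base curve (roughly $\beta-\alpha d\geq 2g$), which again is not automatic.

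Your ampleness check is essentially fine (with the refinement that for a horizontal curve of class $\alpha C^-+\beta F$ the multiplicity at each $p_i$ is at most $\alpha$, so $\sum\mu_i m_i\leq \alpha\sum m_i$), but the very ampleness argument would need a different mechanism than the $a=1$ reduction. In Biancofiore's paper the proof proceeds instead via Reider-type or adjunction techniques adapted to the ruled setting; the decomposition you propose does not substitute for that.
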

\subsection{Artin's theorem on contractions}

\begin{thm}[{\cite[Corollary 6.10]{artinAlgebraizationFormalModuli1970}}]
\label{Thm:Arin contraction}
    Let $X'$ be an algebraic space and $Y'\subset X'$ a closed subspace such that $I'=\cI(Y')$ is locally principal, for instance assume that $X'$, $Y'$ are regular and of dimensions $d$, $d-1$, respectively.
    Let $f_0:Y'\to Y$ be a proper map. Suppose that
    \begin{enumerate}
        \item For every coherent sheaf $F$ on $Y'$,  if $n\gg 0$, then
        $$R^1f_{0*}(F\otimes(I'/I'^2)^{\otimes n})=0.$$
        \item For every $n\geq 1$, 
        the canonical map
        $$f_{0*}(\cO_{X'}/I'^n)\otimes_{f_{0*}\cO_{Y'}}\cO_Y\to \cO_Y$$ 
        is surjective.
    \end{enumerate}
    Then there is a modification $f:X'\to X$, $Y\subset X$ whose set-theoretic restriction to $Y$ is $f_0$.

\end{thm}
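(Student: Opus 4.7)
The plan is to reduce the statement to the algebraization machinery Artin develops earlier in the same paper, so that the argument is really a reconstruction of $X$ from a formal neighborhood of $Y$ together with a gluing step. First I would build a candidate formal completion of the sought-after $X$ along $Y$. The inverse system of sheaves $\cO_n := f_{0*}(\cO_{X'}/I'^n)$ on $Y$ will serve as the structure sheaf of this completion. Coherence of each $\cO_n$ as an $\cO_Y$-algebra follows by induction on $n$ using the short exact sequences
\begin{equation*}
0 \to I'^n/I'^{n+1} \to \cO_{X'}/I'^{n+1} \to \cO_{X'}/I'^n \to 0,
\end{equation*}
the fact that $I'^n/I'^{n+1}$ is a quotient of $(I'/I'^2)^{\otimes n}$ (using local principality of $I'$), and hypothesis (1) to kill the $R^1 f_{0*}$ obstruction to pushing these sequences down. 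Passing to the limit produces a coherent formal algebraic space $\hat{X}$ supported on $Y$.

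Next, hypothesis (2) pins down $Y$ as an honest closed subspace of $\hat{X}$: the surjectivity of the canonical map $f_{0*}(\cO_{X'}/I'^n) \otimes_{f_{0*}\cO_{Y'}} \cO_Y \to \cO_Y$ exhibits $\cO_Y$ as a genuine quotient of $\cO_{\hat{X}}$, giving the formal ideal $\hat{I}$ whose reduction is exactly $\cO_Y$. At this point I would invoke Artin's main effectivization result (developed earlier in the same paper) to algebraize $\hat{X}$: produce an algebraic space $X$ containing $Y$ as a closed subspace whose formal completion is $\hat{X}$. By construction one obtains a morphism from the formal completion of $X'$ along $Y'$ to $\hat{X}$ lifting $f_0$.

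Finally, I would glue this formal morphism with the tautological identification $X' \setminus Y' \cong X \setminus Y$ (nothing is contracted away from $Y'$) to obtain the required modification $f: X' \to X$ whose set-theoretic restriction to $Y'$ is $f_0$; properness of $f_0$ and the Artin étale-descent formalism for algebraic spaces ensure that $f$ is proper. The main obstacle is the effectivization step: showing that the formal algebraic space $\hat{X}$ is actually the completion of an honest algebraic space $X$. This is precisely the technical heart of Artin's approximation theorem, which produces a versal deformation of $Y$ over a finite-type base and then descends $\hat{X}$ along it; everything else in the argument is essentially bookkeeping around that input.
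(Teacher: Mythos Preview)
The paper does not prove this statement; it is quoted verbatim as a preliminary result from Artin's paper \cite{artinAlgebraizationFormalModuli1970} and used later as a black box (in the proof of Lemma~\ref{lem:analytic family}). There is therefore no proof in the paper to compare your proposal against.

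That said, your sketch is a faithful outline of Artin's original argument: one builds the formal object $\hat X=\varinjlim\operatorname{Spec}_Y f_{0*}(\cO_{X'}/I'^n)$, checks via hypotheses (1) and (2) that it is a formal modification of $Y$ in Artin's sense, and then invokes the main algebraization theorem of the same paper to produce the algebraic space $X$ together with the map $f$. The only point I would flag is your gluing step: you do not yet have ``$X'\setminus Y'\cong X\setminus Y$'' tautologically, since $X$ was produced abstractly by algebraization rather than as a quotient of $X'$. In Artin's treatment the morphism $f:X'\to X$ is obtained by applying the existence theorem for formal modifications to the \emph{dilatation} side as well, and the identification on the open complement comes out of that construction. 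This is a minor rearrangement rather than a gap.
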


By \textit{modification}, we mean a pair consisting of a proper map
$f:X'\to X$
of algebraic spaces and a closed subset $Y\subset X$, such that the restriction of $f$ to $U:=X\setminus Y$ is an isomorphism. 
We may also refer to the pair as a \textit{contraction} of $X'$, or as a \textit{dilatation} of $X$, respectively.

Note that condition (2) is automatic if $R^1f_{0*}(I'/I'^2)^{\otimes n}=0$ for every $n\geq 1$.

\section{Proof of Theorem \ref{thm:bdd}}
In this section, we will prove the boundedness of $\mathcal{F}_{glc}(2, \Phi, v)$ under additional assumptions.

We  first recall the birational boundedness of $\mathcal{F}_{glc}(d, \Phi, v)$.
\begin{thm}[{\cite[Proposition 5.2]{birkarBoundednessVolumeGeneralised2021}}]\label{thm:bir bdd}
    Let $d\in \bN$, $\Phi\subset \bQ^{\ge 0}$ be a DCC set, and $v\in \bQ^{>0}$.
Then there exists a bounded set of couples $\mathcal{P}$ such that for each 
$$
(X,B+M)\in \mathcal{F}_{glc}(d,\Phi,v)
$$ 
with data $X'\overset{\phi}\to X$ and $M'=\sum \mu_iM_i'$, there is a log smooth couple $(\overline{X},\overline{\Sigma})\in \mathcal{P}$ and a birational map 
$\overline{X}\dashrightarrow X$ such that 
\begin{itemize}
\item $\overline{\Sigma}\ge \overline{B}$, where $\overline{B}$ is the sum of the reduced exceptional 
divisors of $\pi:\overline{X}\dashrightarrow X$ plus the birational transform of $B$,

\item each $M_i'$  descends to $\overline{X}$, say as $\overline{M}_i$,  and 

\item letting $\overline{M}=\sum \mu_i\overline{M}_i$, we have 
$$
\vol(K_{\overline{X}}+\overline{B}+\overline{M})=v.
$$
\end{itemize}
\end{thm}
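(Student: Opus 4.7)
My plan is to deduce birational boundedness of $\mathcal{F}_{glc}(d,\Phi,v)$ from an effective birationality statement for generalised lc pairs with bounded volume, and then lift the resulting birational bounded family to a log smooth family of couples carrying the data of $M$.

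First I would establish an effective birationality step: there is an integer $n=n(d,\Phi,v)$ such that for every $(X,B+M)\in \mathcal{F}_{glc}(d,\Phi,v)$, some linear subsystem of $|n(K_X+B+M)|$ defines a birational map $\phi:X\dashrightarrow \bP^N$ with $N=N(d,\Phi,v)$. The lc case would be reduced to the klt case by a standard perturbation: since $K_X+B+M$ is ample, one can replace $B+M$ by $(1-\varepsilon)(B+M)+\varepsilon\Delta$ for a suitable auxiliary generalised boundary $\Delta$ to produce a generalised klt pair of comparable volume whose coefficients still lie in a fixed DCC set, and then invoke effective birationality for generalised klt pairs. The klt statement itself rests on ACC for generalised log canonical thresholds and on DCC of both the coefficients of $B$ and of the weights $\mu_i$, together with boundedness of complements for generalised pairs.

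Next, the images $\phi(X)\subset\bP^N$ have bounded degree, since
$$\deg \phi(X)\le (n(K_X+B+M))^d = n^d v.$$
Hence $\{\phi(X)\}$ lies in a bounded family of subvarieties of $\bP^N$, providing a bounded family of projective varieties birational to the $X$'s. Applying a simultaneous log resolution over the finite type parameter base produces a bounded family of log smooth couples $(\overline{X},\overline{\Sigma})$, and I take $\mathcal{P}$ to be (a suitable enlargement of) this family. For any given $(X,B+M)$ with data $X'\to X$, I choose an element of $\mathcal{P}$ that dominates $X'$: this can be arranged because the additional blow-ups needed to dominate $X'$ occur above a fixed bounded $X$ and can themselves be parametrised by a finite type base. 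On such an $\overline{X}$ each $M'_i$ descends to a nef $\overline{M}_i$, and putting the strict transform of $\Supp B$ together with all exceptional divisors into $\overline{\Sigma}$ ensures $\overline{\Sigma}\ge \overline{B}$. Finally, writing $K_{\overline{X}}+\overline{B}+\overline{M}=\pi^*(K_X+B+M)+E$ with $E$ effective and $\pi$-exceptional (by the generalised lc assumption and the negativity lemma) gives $\vol(K_{\overline{X}}+\overline{B}+\overline{M})=\vol(K_X+B+M)=v$.

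The main obstacle is the effective birationality in the first step. For generalised lc pairs of fixed volume, this requires the full strength of the theory of generalised pairs: ACC for generalised log canonical thresholds, boundedness of complements in the generalised setting, and a careful perturbation argument to pass from lc to klt while keeping the new coefficients in a DCC set depending only on $(d,\Phi,v)$. The delicate point is that $M$ itself has no a priori bound on its Cartier index (only the $M'_i$ are Cartier, on a model that may be very high), so the perturbation and the subsequent bounding of the Cartier index of the divisor giving $\phi$ must be carried out entirely in terms of the DCC data of $\Phi$ and $\mu_i$, rather than through any bound on $M$ itself.
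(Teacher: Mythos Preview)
The paper does not prove this theorem; it is quoted verbatim from \cite[Proposition 5.2]{birkarBoundednessVolumeGeneralised2021} and used as a black box. So there is no ``paper's own proof'' to compare against, only the original argument of Birkar.

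Your overall architecture---effective birationality $\Rightarrow$ bounded degree image $\Rightarrow$ bounded family of log smooth couples $\Rightarrow$ volume preservation via negativity---matches the shape of Birkar's proof. The perturbation from lc to klt and the final volume computation are both fine.

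The genuine gap is in your treatment of the descent of $M_i'$. You write that you will ``choose an element of $\mathcal{P}$ that dominates $X'$'' and that ``the additional blow-ups needed to dominate $X'$ \ldots can themselves be parametrised by a finite type base.'' This is false as stated: the model $X'$ on which $M'$ is given can be arbitrarily high over $X$, so no bounded family of blow-ups of a bounded $\overline{X}$ can dominate all such $X'$. Descent of the nef part to a \emph{bounded} model is exactly the non-trivial content here. In Birkar's argument (and this is visible in the paper's later use of the result, see the proof of Theorem~\ref{thm:bdd}(1), which invokes Step~4 of \cite[Proof of Theorem 1.3 and Theorem 1.5]{birkarBoundednessVolumeGeneralised2021}), the point is not to dominate $X'$ but to bound the $\overline{M}_i$ numerically: one shows $\overline{M}_i\cdot A^{d-1}$ is bounded for a fixed very ample $A$ on $\overline{X}$, hence each $\overline{M}_i$ is linearly equivalent to a difference of bounded effective divisors supported on an enlarged $\overline{\Sigma}$, and after a further \emph{bounded} log resolution the resulting divisors become nef Cartier, i.e.\ the b-divisor $M_i$ genuinely descends. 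Your sketch skips this mechanism entirely and replaces it with an impossible domination claim.
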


\begin{cor}\label{cor:bdd can model}
   Let $d \in \mathbb{N}$, $\Phi \subset \mathbb{Q}^{\geq 0}$ be a DCC set, and $v \in \mathbb{Q}^{>0}$. For each
$$(X, B+M) \in \mathcal{F}_{glc}(d, \Phi, v),$$
 assuming that the MMP conjecture and the Abundance conjecture hold, let $Z$ be the canonical model of $X$.
If $K_X$ is big, then $Z$ is bounded. Moreover, if $K_X$ is ample, then $(X,B+M)$ is bounded.
\end{cor}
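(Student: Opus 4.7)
The plan is to use Theorem \ref{thm:bir bdd} to bound $\vol(K_Z)$ from above, then invoke the boundedness of canonically polarized lc varieties; the moreover part follows from an effective Matsusaka-type statement.

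Applying Theorem \ref{thm:bir bdd} to $(X, B+M) \in \cF_{glc}(d, \Phi, v)$ produces a log smooth couple $(\overline X, \overline \Sigma)$ in a fixed bounded family, together with a birational map $\overline X \dashrightarrow X$, an effective divisor $\overline B \leq \overline \Sigma$, and a nef divisor $\overline M = \sum \mu_i \overline M_i$ (each $\overline M_i$ nef Cartier) satisfying $\vol(K_{\overline X} + \overline B + \overline M) = v$. Since $\overline B$ is effective and $\overline M$ is nef, $\overline B + \overline M$ is pseudo-effective, and monotonicity of the volume gives
\[
\vol(K_X) \;=\; \vol(K_{\overline X}) \;\leq\; \vol(K_{\overline X} + \overline B + \overline M) \;=\; v.
\]
By the remark in \S\ref{S:g-pair}, $X$ is lc. Under the MMP and abundance conjectures, the canonical model $Z$ of $X$ exists as a projective lc variety with $K_Z$ ample and $0 < \vol(K_Z) = \vol(K_X) \leq v$. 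Combining the ACC for volumes of lc canonical models with the boundedness of canonically polarized lc varieties of fixed volume \cite{haconACCLogCanonical2014,haconBoundednessModuliVarieties2018}, we conclude that $Z$ lies in a bounded family.

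For the moreover part, $K_X$ ample implies $X = Z$, hence $X$ is already bounded by the previous step. Set $D := K_X + B + M$, which is ample with $D^d = v$. Since the underlying varieties $X$ vary in a bounded family of lc varieties and $D$ is an ample $\bQ$-Cartier divisor of fixed volume, an effective Matsusaka-type theorem provides a uniform $N \in \bN$ such that $A := ND$ is very ample. Then $A^d = N^d v$ and $A^{d-1} \cdot D = N^{d-1} v$ are both bounded, witnessing that $(X, B+M)$ lies in a bounded family.

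The main obstacle is ensuring that the Hacon--McKernan--Xu boundedness applies cleanly to a pure lc canonical model with only an upper bound on the volume; this uses the DCC/ACC behavior of lc volumes to reduce to the fixed-volume case. A secondary subtlety is the effective Matsusaka statement in the moreover part, which requires uniform control of singularities and volume on the bounded family of $X$'s, both of which are guaranteed by the first part of the argument.
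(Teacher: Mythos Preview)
Your argument for the first assertion is essentially sound but takes a different route from the paper. You bound $\vol(K_Z)=\vol(K_{\overline X})\le v$ and then invoke the Hacon--McKernan--Xu boundedness for canonically polarized varieties with volume bounded above. The paper instead observes that $Z$ is the log canonical model of the pair $(\overline X,\overline E)$, where $\overline E\le\overline\Sigma$ is the reduced $\pi$-exceptional divisor, and then applies \cite[Theorem~1.2]{haconBoundednessModuliVarieties2018} directly to the bounded log smooth family $(\overline X,\overline E)$. The paper's route is slightly more efficient because it avoids appealing to the full DCC/ACC machinery for volumes. One correction: your citation of the remark in \S\ref{S:g-pair} to conclude that $X$ is lc is invalid in dimension $d>2$ (that remark is specific to surfaces). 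Fortunately you do not need this: since $Z$ is the canonical model of the smooth variety $\overline X$, it automatically has canonical singularities.

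The ``moreover'' part, however, has a genuine gap. You invoke an effective Matsusaka-type theorem to produce a uniform $N$ with $N(K_X+B+M)$ very ample. Any such statement requires a bound on the Cartier index of the divisor in question, and here the Cartier index of $K_X+B+M$ (equivalently of $B+M$, once $K_X$ is controlled) is \emph{not} bounded a priori --- indeed, producing examples where it is unbounded is the entire point of the paper. Knowing that $X$ lies in a bounded family and that $(K_X+B+M)^d=v$ is not enough input for Matsusaka. The paper circumvents this by using $lK_X$ as the very ample divisor: boundedness of $X$ gives a uniform $l$ with $lK_X$ very ample, and then one checks $(lK_X)^d\le l^d v$ and $(lK_X)^{d-1}\cdot(K_X+B+M)\le l^{d-1}(K_X+B+M)^d=l^{d-1}v$, the last inequality using that $B+M$ is pseudo-effective and both $K_X$ and $K_X+B+M$ are nef. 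This is exactly the step your argument is missing.
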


\begin{proof}
Let $(\overline{X}, \overline{\Sigma})$ be the birationally bounded model of $(X, B+M)$  given in Theorem \ref{thm:bir bdd}, and let $\overline{E} \subset \overline{\Sigma}$ be the sum of the reduced exceptional divisors of the birational map $\pi:\overline{X} \dashrightarrow X$.
If $K_X$ is big,
then  $Z$ is also the log canonical model of $(\overline{X}, \overline{E})$.
But then since $(\overline{X}, \overline{E})$ is log smooth and it belongs to a bounded family, its log canonical model also belongs to a bounded family, by \cite[Theorem 1.2]{haconBoundednessModuliVarieties2018}.

If $K_X$ is ample, then $X$ belongs to a bounded family. Therefore, there exists $l \in \mathbb{N}$ depending only on $(d, \Phi, v)$ such that $lK_X$ is very ample. 
Then we have
$$(lK_X)^d=l^d\vol(K_X)\leq l^d\vol(K_X + B + M) = l^d v, $$
and 
$$(lK_X)^{d-1} \cdot (K_X + B + M) 
\leq l^{d-1} (K_X + B + M)^d = l^{d-1} v.$$
Hence $(X, B+M)$ is bounded.
\end{proof}

\begin{proof}[Proof of Theorem \ref{thm:bdd}]
We use the notation in Theorem \ref{thm:bir bdd}.

(1). We aim to show that the Cartier index of $M$ is bounded, and hence $(X, B + M)$ is bounded by \cite[Theorem 1.14]{filipazziBoundednessLogCanonical2018}.

By Step 4 of \cite[Proof of Theorem 1.5, p.44]{birkarBoundednessVolumeGeneralised2021}, the coefficients of $B$ and the $\mu_i$ all belong to a fixed finite set depending only on $(2, \Phi, v)$.
By Step 4  of \cite[Proof of Theorem 1.3, p.35]{birkarBoundednessVolumeGeneralised2021},
there is a log smooth couple $(\overline{\cX},\overline{\Omega})$ and a log smooth projective morphism  $(\overline{\cX},\overline{\Omega})\to T$ onto a smooth variety such that for each $(X,B+M)\in\cF_{glc}(2,\Phi,v)$,
there is a closed point $t\in T$ so that $(\overline{X},\overline{\Sigma})\simeq (\overline{\cX}_t,\overline{\Omega}_t)$.
Moreover,  for each $i$, there exist irreducible components $\overline{\cG}_i$, $\overline{\cF}_i$ of $\overline{\Omega}$ such that 
$$\overline{\cF}_{i,t}- \overline{\cG}_{i,t}  \sim n\overline{M}_i$$ for some $n\in \bN$ depending only on $d=2$. Therefore, there exist Cartier divisors $n\overline{\cM}_i$ on $\overline{\cX}$ such that $n\overline{\cM}_{i,t} \sim  n\overline{M}_i$ for each $i$.
Let $\overline{\cM} := \sum \mu_i \overline{\cM}_i$, then $\Supp(\overline{\cM})\subset\overline{\Omega}$. Since $\mu_i$ belongs to a finite set, after replacing $n$ by a bounded multiplier, $n\overline{\cM}$ is Cartier and $n\overline{\cM}_t\sim n\overline{M}$.

We now follow the idea in Step 5 of \cite[Proof of Theorem 1.14, p.845]{filipazziBoundednessLogCanonical2018}.
For each $(X,B+M)\in\cF_{glc}(2,\Phi,v)$ and the birational map $\pi: \overline{X} \dashrightarrow X$,  we can write 
$$\pi^*M = \overline{M} + \sum b_j E_j,$$
where $E=\sum_j  E_j$ is the $\pi$-exceptional divisor.
Since $\overline{M}$ and the exceptional divisors of $\pi: \overline{X} \dashrightarrow X$ deform in the family, and the numerical pullback is computed intersection-theoretically,
it follows from the negative definiteness of  intersection matrix  $(E_j\cdot E_k)_{jk}$  that  the system of linear equations
 $$(\overline{M} + \sum_j b_j E_j) \cdot E_k = 0\ \ \  \text{ for all }k$$
 has a unique solution depending only on $(2,\Phi,v)$.
Therefore, there exists a divisor 
$$\overline{\Phi}\leq \overline{\Omega}+\overline{\cM},$$ supported on $\overline{\Omega}$, such that
$\pi^*M=\overline{\Phi}_t$.  Let $l$ be a positive integer depending only on $(2,\Phi,v)$ such that $l\overline{\Phi}$  is integral. 

Since $X$ has only rational singularities, by \cite[Lemma 4.13]{kollarBirationalGeometryAlgebraic1998}, $l\overline{\Phi}_t\sim 0$ in an analytic neighborhood of any $\pi$-exceptional curve. Thus, $\pi_*\cO_{\overline{\cX}_t}(l\overline{\Phi}_t)$ is locally free in an analytic neighborhood of  any closed point $x\in X$. Then, $lM$ is Cartier at $\widehat{\cO}_{X,x}$ for every closed point $x\in X$. 
Therefore, $lM$ is  Cartier by \cite[Lemma 5.12]{boucksomValuationSpacesMultiplier2015}.

(2). If $X\to Z$ is a minimal ruled fibration onto a nonsingular curve $Z$,   then $X$ has only rational singularities by \cite[Lemma 4.6]{sakaiStructureNormalSurfaces1985}. It follows from (1) that $(X,B+M)$ is bounded.

(3). Since the exceptional divisors of $\pi: \overline{X} \dashrightarrow X$ belong to a bounded family, the dual graphs of the $\pi$-exceptional curves and the corresponding weights are determined. By \cite[Remark 4.9]{kollarBirationalGeometryAlgebraic1998}, the analytic isomorphism type of the germ $x \in X$ is determined by the dual graph. Therefore, the Cartier index of $-K_X$ is bounded.
Then, by the effective base point free theorem \cite[Theorem 1.1]{fujinoEffectiveBasePoint2009} and the very ampleness lemma \cite[Lemma 7.1]{fujinoEffectiveBasepointfreeTheorem2017} for lc pairs, we can find $l\in \bN$ depending only on $(2,\Phi,v)$ such that $-lK_X$ is very ample.

For each $(X,B+M)\in\cF_{glc}(2,\Phi,v)$ and the birational map $\pi: \overline{X} \dashrightarrow X$, we can write 
$$\pi^*K_X = K_{\overline{X}} + \sum a_j E_j$$
and $$\pi^*(K_X+B+M) = K_{\overline{X}} +\pi_*^{-1}B+\overline{M}+ \sum c_j E_j,$$
where $E_j$ are the $\pi$-exceptional divisors.
Moreover, we have 
$$(K_{\overline{X}} + \sum a_j E_j) \cdot E_k = 0 $$
and 
$$(K_{\overline{X}} +\pi_*^{-1}B+\overline{M}+ \sum c_j E_j) \cdot E_k = 0$$
for all $k$. By the same proof as in (1), the coefficients $a_j$ and $c_j$ are determined by $(2, \Phi, v)$.
Since $\pi_*^{-1} B$, $M$, and the exceptional divisors of the birational map $\pi:\overline{X}\dashrightarrow X$ deform in the family,
it follows that $$(-K_X)^2=(-\pi^*K_X)^2$$
and $$(-K_X)\cdot(K_X+B+M)=\pi^*(-K_X)\cdot \pi^*(K_X+B+M)$$ are bounded. Hence, $(X,B+M)$ belongs to a bounded family.

(4). It follows from Corollary \ref{cor:bdd can model}. Alternatively,  to bound $(X, B + M)$, we can bound the Cartier index of $K_X$ and the intersection numbers \[
K_X^2 \quad \text{and} \quad K_X \cdot (K_X + B + M)
\]
by the same argument as in (3).
\end{proof}

	\section{\texorpdfstring{Failure of boundedness for $\cF_{glc}(2,\Phi,v)$}{Failure of boundedness of Fglc(2,Phi,v)}}

In this section, we construct counterexamples to the boundedness of $\cF_{glc}(2, \Phi, v)$, where the underlying varieties $X$ can have arbitrary Kodaira dimension. Since boundedness holds when $X$ has only rational singularities by Theorem \ref{thm:bdd}, in our examples, $X$ has elliptic singularities.
    
	\subsection{Calabi-Yau surfaces}\label{S:K=0}
	In this subsection, we construct an unbounded set of generalised pairs $(X_i, B_i + M_i) \in\cP_0\subset \cF_{glc}(2, \Phi, v)$ for $\Phi = \{0,1
\}$ and $v = 22$, such that $K_{X_i} \sim 0$, $B_i = 0$, and $M_i$ is an ample Weil divisor with an unbounded Cartier index.

Let $C \subset \mathbb{P}^2$ be a smooth elliptic curve, and let $p_{i,1}, \dots, p_{i,11} \in C$ be distinct points such that for each $1 \leq j \leq 11$, the divisor $p_{i,j} - p_0 \in \Pic^0(C)$ has order $n_{i,j}$, where $p_0$ is the identity element of the group structure on $C$. Let $n_i$ be the order of the divisor $\sum_{j=1}^{11} (p_0 - p_{i,j})$.
We may fix the points $p_{i,1}, \dots, p_{i,10}$ and vary $p_{i,11}$ so that $n_{i,j}$ is independent of $i$ for $1 \leq j \leq 10$, and both $n_{i,11}$ and $n_i$ tend to infinity as $i \to \infty$.
          
Let $f_i : Y_i \to \mathbb{P}^2$ be the blow up of $\mathbb{P}^2$ at $p_{i,1}, \dots, p_{i,11}$, and let $E_{i,1}, \dots, E_{i,11}$ be the exceptional divisors over $p_{i,1}, \dots, p_{i,11}$, respectively. Let $F_i$ denote the birational transform of $C$ on $Y_i$. Write $E_i = \sum_{j=1}^{11} E_{i,j}$, then we have
$$ K_{Y_i} = f_i^* K_{\mathbb{P}^2} + E_i \quad \text{and} \quad F_i + E_i = f_i^* C, $$
hence we have
$$ K_{Y_i} + F_i = f_i^* (K_{\mathbb{P}^2} + C) \sim 0. $$
Let $H_i = f_i^* \mathcal{O}_{\mathbb{P}^2}(1)$ and $A_i = 5H_i - E_i$, then by Theorem \ref{thm:very ample on rational}, $A_i$ is very ample on $Y_i$. We have
$$ \text{vol}(A_i) = A_i^2 = 25 - 11 = 14. $$
Hence, $Y_i$ belongs to a bounded family.
	
Since $F_i \sim 3H_i - E_i$, an easy computation shows that $(A_i + 2F_i) \cdot F_i = 0$. Let $\widetilde{p_{i,j}}$ be the points on $F_i$ lying over $p_{i,j}$ for $1 \leq j \leq 11$. Then we have 
		\begin{equation}\nonumber
		\begin{aligned}
			(A_i+2F_i)|_ {F_i}= &   (11H_i-3E_i)|_ {F_i}\\
			=&f_i^*(\cO_{ \bP^ 2}(11))|_{F_i}\otimes\cO_{F_i}(-\sum_{j=1}^{11}\widetilde{p_{i,j}})^{\otimes 3}\\
				=&f_i^*(\cO_{ \bP^ 2}(11)|_{C}\otimes\cO_{C}(-\sum_{j=1}^{11}{p_{i,j}})^{\otimes 3})\\
				=&f_i^*(\cO_{C}(\sum_{j=1}^{11}{(p_0-p_{i,j}}))^{\otimes 3}).	
		\end{aligned}
	\end{equation}
Let $m_i$
	be the order of  $(A_i+2F_i)|_ {F_i}\in \Pic^0(F_i)$ so that $\cO_{Y_i}(m_i(A_i+2F_i))\otimes\cO_{F_i}\simeq \cO_{F_i}$. Then we have $m_i=n_i$ if $3\nmid n_i$,  and $m_i=\frac{n_i}{3}$ if $3\mid n_i$.

\begin{lemma}\label{lem:bpf}
The linear system $|m_i(A_i + 2F_i)|$ is big and base point free on $Y_i$, and it defines a contraction $\pi_i: Y_i \to X_i$ to a projective Calabi-Yau surface, which contracts $F_i$ to a simple elliptic singularity.
\end{lemma}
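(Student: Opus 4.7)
The plan is to prove positivity, base point freeness, and the identification of the contraction in turn, using only intersection theory on $Y_i$ together with one application of Kawamata-Viehweg vanishing. First I would verify via a short intersection-number calculation (using $H_i^2=1$, $E_i^2=-11$, and $F_i=3H_i-E_i$) that $(A_i+2F_i)^2=22$, $(A_i+2F_i)\cdot F_i=0$, and $(A_i+2F_i)\cdot C\geq A_i\cdot C>0$ for every irreducible curve $C\ne F_i$, using that $A_i$ is ample. Thus $A_i+2F_i$ is nef and big with $F_i$ as its unique null curve, and the same holds for $L_i:=m_i(A_i+2F_i)$.

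For base point freeness of $|L_i|$, I would split the argument according to whether the point lies on $F_i$. Off $F_i$: since $A_i$ is very ample, $|m_iA_i|$ is base point free, so any $p\notin F_i$ is avoided by some divisor $D'+2m_iF_i\in|L_i|$ with $D'\in|m_iA_i|$. On $F_i$: the defining property of $m_i$ gives $L_i|_{F_i}\simeq \cO_{F_i}$, so the short exact sequence
\begin{equation*}
0\to\cO_{Y_i}(L_i-F_i)\to\cO_{Y_i}(L_i)\to\cO_{F_i}\to 0
\end{equation*}
becomes the key tool once we note that $L_i-F_i\sim K_{Y_i}+L_i$ (because $K_{Y_i}\sim -F_i$). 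Kawamata-Viehweg vanishing applied to the big and nef divisor $L_i$ kills $H^1$, so the restriction map is surjective and we can lift the nowhere-vanishing section of $\cO_{F_i}$ to a section of $L_i$ whose zero locus is disjoint from $F_i$.

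Finally, since $|L_i|$ is base point free and big, it defines a morphism whose Stein factorization produces a contraction $\pi_i:Y_i\to X_i$ onto a normal projective surface. The exceptional locus of $\pi_i$ consists of the curves $C$ with $L_i\cdot C=0$, which is exactly $F_i$, and being irreducible it is collapsed to a single point $P\in X_i$. Pushing forward $K_{Y_i}\sim -F_i$ yields $K_{X_i}\sim 0$, so $X_i$ is Calabi-Yau; and since the sole exceptional curve of $\pi_i$ is the smooth elliptic curve $F_i$ with $F_i^2=-2$ (in particular not a $(-1)$-curve), $\pi_i$ is the minimal resolution at $P$, so $P$ is a simple elliptic singularity by definition. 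The only mildly nontrivial step is the Kawamata-Viehweg argument used to lift the trivial section along $F_i$; everything else follows from the very ampleness of $A_i$ and standard properties of the Stein factorization.
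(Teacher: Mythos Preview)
Your proof is correct and in fact somewhat cleaner than the paper's for the base-point-freeness step. The paper restricts $\cO_{Y_i}(L_i)$ to the non-reduced scheme $2F_i$, which forces it first to prove that $\Pic(2F_i)\to\Pic(F_i)$ is an isomorphism (via the truncated exponential sequence and $H^1(F_i,\cO_{F_i}(-F_i|_{F_i}))=0$), and then to kill $H^1(Y_i,m_iA_i+(2m_i-2)F_i)$ by checking via Theorem~\ref{thm:ample on rational} that $m_iA_i+(2m_i-1)F_i$ is genuinely ample so that ordinary Kodaira vanishing applies. You instead restrict to $F_i$ itself and exploit the coincidence $L_i-F_i\sim K_{Y_i}+L_i$ (coming from $K_{Y_i}\sim -F_i$), so Kawamata--Viehweg vanishing for the nef and big $L_i$ kills the obstruction in one stroke, with no need for the $\Pic(2F_i)$ argument or the auxiliary ampleness check. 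The paper's route has the minor virtue of producing a section vanishing to order $\geq 2$ along~$F_i$ (since it lifts $1\in H^0(\cO_{2F_i})$), but that is not needed for the lemma as stated. The remaining parts of your argument---nefness, bigness, the off-$F_i$ base points via very ampleness of $A_i$, and the identification of the contraction and the simple elliptic singularity---match the paper's reasoning essentially verbatim.
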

	\begin{proof}

	    Since $A_i$ is ample and $F_i$ is effective, $A_i + 2F_i$ is big. 
        Moreover,
        we have $(A_i + 2F_i) \cdot F_i' \geq 1$ for any irreducible curve $F_i' \neq F_i$, and $(A_i + 2F_i) \cdot F_i = 0$. Hence, $A_i + 2F_i$ is nef.
        We aim to show  $|m_i(A_i + 2F_i)|$  is base point free.
        
	We  first claim that the restriction map 
	$\Pic(2F_i)\to \Pic(F_i)$ is an isomorphism, which implies
	$$\cO_{Y_i}(m_i(A_i+2F_i))\otimes\cO_{2F_i}\simeq \cO_{2F_i}.$$  
	Indeed, the short  exact sequence
	$$0\to \cO_{Y_i}(-F_i)\otimes\cO_{F_i} \to  \cO_{2F_i}^*\to \cO_{F_i}^*\to 0$$
	 induces a long  exact sequence
	$$H^1(F_i,\cO_{F_i}(-F_i|_{F_i}))\to \Pic(2F_i)\to \Pic(F_i)\to 0.$$
	Since $-F_i^2=2$, we have $H^1(F_i,\cO_{F_i}(-F_i|_{F_i}))=0$, which yields the desired isomorphism.
    
Since $\cO_{Y_i}(m_i(A_i+2F_i))\otimes\cO_{2F_i} \simeq \cO_{2F_i}$, we have the short exact sequence
\begin{equation}
    0\to \mathcal{O}_{Y_i}(m_iA_i+(2m_i-2)F_i) \to \mathcal{O}_{Y_i}(m_i(A_i+2F_i)) \to \mathcal{O}_{2F_i} \to 0.
    \label{eq:exact_seq}
\end{equation}
By Theorem \ref{thm:ample on rational}, the divisor
$$m_iA_i+(2m_i-1)F_i\sim (11m_i-3)H_i-(3m_i-1)E_i$$
is ample. Hence, Kodaira vanishing gives
$$H^1(Y_i, m_iA_i+(2m_i-2)F_i) = H^1(Y_i, K_{Y_i}+m_iA_i+(2m_i-1)F_i) = 0.$$
Therefore, the map
$$H^0(Y_i, m_i(A_i+2F_i)) \to H^0(\cO_{2F_i})$$
induced by the exact sequence (\ref{eq:exact_seq})  is surjective.

Thus, there exists a divisor in the linear system
$|m_i(A_i+2F_i)|$
that does not intersect any point of $F_i = \Supp(2F_i)$.
Moreover, since $A_i$ 
is very ample,  any base point of $|m_i(A_i+2F_i)|$
must be contained in $F_i$.
Therefore, $|m_i(A_i+2F_i)|$ is base point free. Since it is also big, it defines a birational morphism $\pi_i: Y_i \to X_i$ onto a projective surface $X_i$ with $K_{X_i} \sim 0$, which contracts $F_i$ to a simple elliptic singularity.
    	\end{proof}

	Now let $M_{Y_i} := A_i + 2F_i$ be the big and semi-ample Cartier divisor on $Y_i$, which defines the contraction $\pi_i: Y_i \to X_i$ that contracts $F_i$ to a simple elliptic singularity.
    Then we have 
	$$  K_{Y_i}+F_i=\pi_i^*K_{X_i} \quad \text{and} \quad \pi_i^*M_i=M_{Y_i},$$
    where $M_i:=\pi_{i*}M_{Y_i}$.
	Therefore,  $(X_i,M_i)$ is a generalised lc surface with data $Y_i\to X_i$ and $M_{Y_i}$.
	 Moreover, $K_{X_i}+M_i$ is ample and 
	 $$\vol(K_{X_i}+M_i)=\vol(K_{Y_i}+F_i+M_{Y_i})=(11H_i-3E_i)^2=22.$$
	 Thus, 
	 $$(X_i,M_i)\in\cP_0\subset \cF_{glc}(2,\Phi,v)$$
	 for $\Phi=\{0,1\}$ and $v=22$. 
     
     We claim that the Cartier index of $M_i$  is not bounded. Indeed, assume that there exists $m\in \bN$ such that $mM_i$ is Cartier for all $i$. Then, by the effective base point free theorem \cite[Theorem 1.1]{fujinoEffectiveBasePoint2009} and the very ampleness lemma \cite[Lemma 7.1]{fujinoEffectiveBasepointfreeTheorem2017} for lc pairs, we can find $m'\in \bN$ such that $m'M_i$ is very ample for all $i$. Hence, $m'M_{Y_i}$ is big and base point free for all $i$, which is  a contradiction.
	 

	 \begin{prop}\label{prop: M unbdd}
	The set of generalised pairs in $\cP_0$  is not bounded.
	 \end{prop}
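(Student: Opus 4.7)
The plan is to argue by contradiction, leveraging the argument in the paragraph preceding the proposition. That argument establishes that no uniform $m \in \bN$ can make $mM_i$ Cartier for all $i$, so the task reduces to showing that boundedness of $\cP_0$ would force the existence of such a uniform $m$.

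Suppose for contradiction that $\cP_0$ is bounded. By definition, there is $r \in \bN$ such that for every $(X_i,M_i) \in \cP_0$ one can find a very ample divisor $A$ on $X_i$ with $A^2 \leq r$ and $A \cdot M_i = A \cdot (K_{X_i}+M_i) \leq r$, using $K_{X_i} \sim 0$. Embedding via $|A|$, the pairs $(X_i, M_i)$ lie in finitely many Hilbert-scheme families over bases of finite type, with $M_i$ an embedded Weil divisor of bounded degree; in particular the $M_i$ are parametrized by an open subscheme of the relative Hilbert scheme of the bounded family of surfaces. Since the Cartier index of a $\bQ$-Cartier Weil divisor is a constructible function on the base of a flat family of surfaces, this yields a uniform $m \in \bN$ (depending only on the bounding data) such that $mM_i$ is Cartier for every $i$.

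Once this $m$ is in hand, the argument of the preceding paragraph supplies the contradiction: by \cite[Theorem 1.1]{fujinoEffectiveBasePoint2009} and \cite[Lemma 7.1]{fujinoEffectiveBasepointfreeTheorem2017} applied to $(X_i,0)$ with the ample Cartier divisor $mM_i$ (noting $K_{X_i}\sim 0$), some uniform $m' \in \bN$ makes $m'M_i$ very ample on $X_i$; pulling back, $m'M_{Y_i} = m'(A_i + 2F_i)$ is big and base point free on $Y_i$; restricting to the elliptic curve $F_i$ gives a globally generated line bundle of degree zero on $F_i$, which must be trivial, so $m_i \mid m'$. Since $m_i \to \infty$ by construction, this is impossible, and hence $\cP_0$ is not bounded.

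The main obstacle is the middle step, where one must pass from the intersection-theoretic boundedness hypothesis to a uniform common denominator $m$ for the Weil divisors $M_i$. The definition of bounded family supplies only numerical bounds and a bounded family of underlying couples; upgrading this to a relative Cartier structure on the divisors $M_i$ is what the Hilbert-scheme and constructibility argument delivers, and it is the only substantive input beyond the computation already carried out before the proposition.
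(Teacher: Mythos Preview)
Your strategy has a genuine gap at the constructibility step. The assertion that ``the Cartier index of a $\bQ$-Cartier Weil divisor is a constructible function on the base of a flat family of surfaces'' is false precisely when the surfaces carry non-rational singularities, which is the situation here.

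To see the failure concretely, take a single fixed projective surface $X$ with a simple elliptic singularity. The local class group at the singular point contains a copy of $\Pic^0(E)$ for the exceptional elliptic curve $E$, so effective Weil divisors of a fixed degree on $X$ (parametrized by a finite-type Hilbert scheme) can have Cartier index equal to the order of an arbitrary point of $\Pic^0(E)$. This order function is $\infty$ on the dense non-torsion locus and assumes every finite value on a countable set; the $\bQ$-Cartier locus is therefore only a countable union of constructible sets, and on it the Cartier index is unbounded. In your argument the points $(X_i,M_i)$ sit in exactly such a countable locus, and no Noetherian-induction argument can begin: at the generic point of their closure the divisor is not $\bQ$-Cartier at all. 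This is not incidental---the proof of Theorem~\ref{thm:bdd}(1) shows that ``bounded family $\Rightarrow$ bounded Cartier index of $M$'' holds under the rational-singularities hypothesis (via \cite[Lemma 4.13]{kollarBirationalGeometryAlgebraic1998}), and the construction in \S\ref{S:K=0} is designed to exhibit its failure for elliptic singularities. If your argument worked, it would prove Theorem~\ref{thm:bdd}(1) with no hypothesis on the singularities.

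The paper's proof is direct and avoids this trap. Given any very ample $N_i$ on $X_i$, the pullback $N_{Y_i}:=\pi_i^*N_i$ is big and base point free on $Y_i$, so it must restrict trivially to the elliptic curve $F_i$. Writing $N_{Y_i}\sim d_iH_i-\sum_j l_{i,j}E_{i,j}$, the condition $N_{Y_i}\cdot F_i=0$ gives $3d_i=\sum_j l_{i,j}$, and triviality of $N_{Y_i}|_{F_i}$ forces $\sum_j l_{i,j}(p_0-p_{i,j})\sim 0$ on $C$. Choosing the torsion order of $p_{i,11}$ coprime to those of the fixed points $p_{i,1},\dots,p_{i,10}$ then forces $l_{i,11}\to\infty$, hence $d_i\to\infty$; since $N_i\cdot(K_{X_i}+M_i)=2d_i$, this rules out any uniform bound as in \S\ref{S:bdd family}.
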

	 \begin{proof}
    
   We will show that  there is no fixed $r \in \bN$ and a very ample divisor $N_i$ on $X_i$ satisfying $N_i \cdot (K_{X_i} + M_i) \leq r$ for all $(X_i, M_i) \in \cP_0\subset\cF_{glc}(2, \Phi, v)$.  Thus, $\cP_0$ is not bounded in the sense of \S\ref{S:bdd family}.
   
Let $N_{Y_i} := \pi_i^* N_i$. Then $N_{Y_i}$ is a big and base point free Cartier divisor on $Y_i$ with $N_{Y_i} \cdot F_i = 0$. 
Moreover, for any irreducible curve $F_i' \neq F_i$ on $Y_i$, we have $N_{Y_i} \cdot F_i' > 0$.
Writing $N_{Y_i} \sim d_i H_i - \sum_{j=1}^{11} l_{i,j} E_{i,j}$ for some integers $d_i, l_{i,1}, \dots, l_{i,11}$, the condition $N_{Y_i} \cdot F_i = 0$ gives
	\begin{equation}
		3d_i-\sum_{j=1}^{11}l_{i,j}=0. 
        \label{eq: relation of d, l_ij}
	\end{equation} 
Since $N_{Y_i} \cdot E_{i,j} = l_{i,j} > 0$ for all $1 \leq j \leq 11$, we have  $d_i > 0$.
For $N_{Y_i}$ to be base point free on $Y_i$, we must have  $	N_{Y_i}|_ {F_i}\simeq \cO_{F_i}$. Since
	\begin{equation}\nonumber
	\begin{aligned}
		N_{Y_i}|_ {F_i}
		=&f_i^*(\cO_{ \bP^ 2}(d_i))|_{F_i}\otimes\cO_{F_i}(-\sum_{j=1}^{11}l_{i,j}\widetilde{p_{i,j}})\\
		=&f_i^*(\cO_{ \bP^ 2}(d_i)|_{C}\otimes\cO_{C}(-\sum_{j=1}^{11}l_{i,j}p_{i,j}))\\
		=&f_i^*(\cO_{C}(\sum_{j=1}^{11}l_{i,j}(p_0-p_{i,j}))),	
	\end{aligned}
\end{equation}
it follows that
\begin{equation}
	\cO_{C}(\sum_{j=1}^{11}l_{i,j}(p_0-p_{i,j}))\simeq \cO_C.
    \label{eq: resrtiction condition}
\end{equation}

Since the orders $n_{i,1}, \dots, n_{i,10}$ of $p_{i,1}, \dots, p_{i,10}$ are fixed, we can choose a strictly increasing subsequence of $n_{i,11}$, the order of $p_{i,11}-p_0$,  such that $n_{i,11}$ is coprime to each  of $n_{i,1}, \dots, n_{i,10}$. 
Thus, we have $l_{i,11} \to +\infty$ as $i \to +\infty$ by (\ref{eq: resrtiction condition}). Therefore, (\ref{eq: relation of d, l_ij}) implies that $d_i \to +\infty$ as $i \to+ \infty$.
    Hence, we conclude that
	 $$N_i \cdot (K_{X_i} + M_i)= N_{Y_i}\cdot M_{Y_i} =11d_i-3\sum_{j=1}^{11}l_{i,j}=2d_i\to +\infty$$
 as $i\to +\infty$.
\end{proof}

\begin{rem}\label{rem: non-proj of X_i}

If we choose $p_{i,11}$ such that $\sum_{j=1}^{11}(p_{i,j}-p_0)\in \Pic^0(C)$ is non-torsion, then $A_i+2F_i$ is nef and big but not semiample in Lemma \ref{lem:bpf}. In this case, $X_i$  exists as a complex analytic space \cite{grauertUberModifikationenUnd1962}, or even as an algebraic space \cite{artinAlgebraizationFormalModuli1970}. However, $Y_i$ cannot be contracted to a projective surface $X_i$. Indeed, assume that $X_i$ is projective, then there exists a very ample divisor $N_i$ on $X_i$. By (\ref{eq: resrtiction condition}) in the proof of Proposition \ref{prop: M unbdd}, we have  $\cO_C(\hat{n}_il_{i,11}(p_0-p_{i,11}))\simeq\cO_C$, where $\hat{n}_i=\prod_{j=1}^{10} n_{i,j}$, which is a contradiction.
\end{rem}

The failure of boundedness of the underlying varieties is more subtle. We also need to take non-projective surfaces into consideration. 
The following lemma is also mentioned in \cite[Example 6]{kollarSeshadrisCriterionOpenness2022}, which shows that projectivity is not an open condition in the family of algebraic spaces  with singularities slightly worse than rational singularities.

\begin{lemma}\label{lem:analytic family}
There exists a family of algebraic spaces $h: \cX \to S$ such that the projective surfaces $X_i\in \cP_0$  correspond to a  countable dense subset $S^\tau$ of $S$.
\end{lemma}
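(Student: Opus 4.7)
The plan is to realise all the surfaces $Y_i$ as fibres of a single smooth projective family over a curve $S$, then apply Artin's theorem (Theorem \ref{Thm:Arin contraction}) relatively to contract a family of elliptic curves, producing a proper morphism $h:\cX\to S$ of algebraic spaces whose projective fibres are precisely the $X_i$. Concretely, I would set $S:=C\setminus\{p_1,\dots,p_{10}\}$, consider $\bP^2\times S$ equipped with the eleven disjoint sections $\sigma_j(s)=(p_j,s)$ for $1\le j\le 10$ together with the diagonal $\sigma_{11}(s)=(s,s)$ coming from $C\hookrightarrow\bP^2$, and simultaneously blow up these sections. This yields a smooth projective morphism $\cY\to S$ with $\cY_s=Y_i$ when $s=p_{i,11}$; the proper transform $\cF\subset\cY$ of $C\times S$ is a smooth Cartier divisor isomorphic to $C\times S$, flat over $S$, with $\cF_s=F_i$.

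Next I would apply Theorem \ref{Thm:Arin contraction} with $X'=\cY$, $Y'=\cF$, $Y=S$, and $f_0:\cF\to S$ the projection, obtaining a modification $\cY\to\cX$ together with an inclusion $S\hookrightarrow\cX$; since $\cY\to S$ is constant along each contracted fibre $F_s$, it factors through $\cY\to\cX$ to give a proper morphism $h:\cX\to S$. To verify the hypotheses, note that the conormal line bundle $L:=I'/I'^2=\cN^\vee_{\cF/\cY}$ has fibre degree $-F_s^2=2$ and is therefore $f_0$-ample; relative Serre vanishing then yields $R^1f_{0*}(G\otimes L^{\otimes n})=0$ for $n\gg 0$ and any coherent $G$ on $\cF$, giving hypothesis (1). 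Taking $G=\cO_\cF$ gives $R^1f_{0*}L^{\otimes n}=0$ for every $n\ge 1$ (from $H^1$-vanishing for positive-degree line bundles on the genus one curve $F_s$), which by the remark following Theorem \ref{Thm:Arin contraction} implies hypothesis (2). Base change of Artin's contraction along $s\hookrightarrow S$ identifies $\cX_s$ with the algebraic-space contraction of $\cF_s\subset\cY_s$.

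Finally, to identify the projective locus, the computation already carried out in the proof of Lemma \ref{lem:bpf} shows
\[
(A_s+2F_s)\big|_{F_s}\;\simeq\;\cO_C\bigl(3(\xi_0+p_0-s)\bigr),\qquad \xi_0:=\sum_{j=1}^{10}(p_0-p_j)\in\Pic^0(C).
\]
Since $\xi_0$ is torsion by choice of the $p_j$, this class is torsion precisely when $s-p_0\in\Pic^0(C)_{\mathrm{tors}}$, yielding a countable dense subset $S^\tau\subset S$. By Lemma \ref{lem:bpf} the fibre $\cX_s$ for $s\in S^\tau$ is a projective Calabi--Yau surface coinciding with the corresponding $X_i\in\cP_0$, while Remark \ref{rem: non-proj of X_i} shows $\cX_s$ is non-projective for $s\notin S^\tau$. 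The main technical point is the uniform verification of Artin's hypotheses along the family, but this reduces cleanly to the fibrewise positivity of the relative conormal bundle of $\cF$ in $\cY$.
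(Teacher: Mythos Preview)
Your proposal is correct and follows essentially the same approach as the paper: both construct $\cY\to S$ by blowing up $\bP^2\times S$ along the ten constant sections and the diagonal section over $S=C\setminus\{p_1,\dots,p_{10}\}$, verify Artin's hypotheses via relative ampleness of the conormal bundle of $\cF$ (using relative Serre and Kodaira vanishing), and identify the projective locus $S^\tau$ via the torsion condition already computed in Lemma~\ref{lem:bpf} together with Remark~\ref{rem: non-proj of X_i}. The only cosmetic difference is that the paper phrases $S^\tau$ as the locus where $p_1+\cdots+p_{10}+p_s$ is torsion in the group law on $C$, which is equivalent to your formulation since the fixed points $p_1,\dots,p_{10}$ are themselves torsion.
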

\begin{proof}

Fix $C \subset \mathbb{P}^2$  a smooth elliptic curve  and distinct points
 $p_1, \dots, p_{10} \in C$. Consider the projection
 $$\bP^2\times S\to S\simeq C\setminus\{p_1,\ldots,p_{10}\}.$$
Let $g:\cY\to S$  be the family of surfaces given by blowing up the constant sections $p_1\times S,\ldots,p_{10}\times S$ and the diagonal section
$$\Gamma=\{(p_s,s)\subset \bP^2\times S\mid s\in S\}$$
in $\bP^2\times S$ where $p_s\in C$ is the point corresponding to $s\in S$  under the given isomorphism $S\simeq C\setminus\{p_1,\ldots,p_{10}\}$. 
Thus, for any $s\in S$, we have that $\mathcal Y_s$ is the blow up of $\bP^2$ along $p_1,\ldots,p_{10},p_s$.
Let $\cE$ be the exceptional divisor for $f:\cY\to \bP^2\times S$, $\cH$ be the pullback of $\cO_{\bP^2}(1)\times S$ via $f$, and $\cF$ be the strict transform of $C\times S$.

Let $\cI$ be the ideal sheaf  for $\cF$ in $\cY$, and 
$g_0:=g|_{\cF}:\cF\to S$. Then $\cI/\cI^2\simeq -\cF|_{\cF}$ is ample over $S$. Therefore, we have
\begin{enumerate}
        \item For every coherent sheaf $\cG$ on $\cF$,         $R^1g_{0*}(\cG\otimes(\cI/\cI^2)^{\otimes n})=0$  for sufficiently large $n$, by relative Serre vanishing.
        \item  $R^1g_{0*}(\cI/\cI^2)^{\otimes n}=0$ for every $n\geq 1$, by relative Kodaira vanishing.
\end{enumerate}
By Theorem \ref{Thm:Arin contraction}, we have a contraction $\pi:\cY\to \cX$ to an algebraic space $\cX$ such that the restriction of $\pi$ to $\cY\setminus \cF$ is an isomorphism and it contracts $\cF$ to a section  of $\cX\to S$. 
Then the induced family $h:\cX\to S$ is a family of algebraic spaces.
Let $$S^\tau:=\{s\in S\mid p_1+\ldots+p_{10}+p_s \text{ is torsion on $C$}\}, \quad \text{ and }\quad S^*:=S\setminus S^\tau.$$
Here $p_1+\ldots+p_{10}+p_s$ is the sum in $C$ according to the group law on $C$. 
Then by Lemma \ref{lem:bpf},
$\cM_{\cY}:=11\cH-3\cE$ is nef and big over $S$. Moreover,
$(\cM_{\cY})_s$ is semiample for $s\in S^\tau$, which contracts $\cY_s$ to a projective surface $\cX_s$,  while $(\cM_{\cY})_s$ is not semiample  for $s\in S^*$. Thus $\cX_s$ is projective if and only if $s\in S^\tau$.
\end{proof}

\begin{thm}\label{thm:P_0 unbdd}
    The set of   underlying varieties   appearing in $\cP_0$ is not bounded.
\end{thm}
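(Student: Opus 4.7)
My plan is to derive a contradiction by comparing a hypothetical bounded family containing $\{X_i\}$ with the algebraic family $h:\cX\to S$ from Lemma~\ref{lem:analytic family}, via an Isom-scheme argument exploiting the Zariski density of $S^\tau$ and the existence of non-projective fibers over $S^*$.

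Suppose for contradiction that the set of underlying varieties appearing in $\cP_0$ is bounded. Restricting to the explicit subfamily built in \S\ref{S:K=0}, we have infinitely many projective varieties $X_i\simeq\cX_{s_i}$ with $s_i$ ranging over the countable Zariski dense subset $S^\tau\subset S$. By the definition of boundedness there exists a projective morphism $\cX'\to T$ with $T$ of finite type over $k$ such that $X_i\simeq \cX'_{t_i}$ for some $t_i\in T$. Pigeonhole on the components of a flattening stratification lets us assume that $T$ is irreducible and that $\cX'\to T$ is flat. Each $X_i$ carries a unique simple elliptic singularity whose minimal resolution is $Y_i$, a blow-up of $\mathbb{P}^2$ at $11$ points; hence, after possibly performing a surjective \'etale base change on $T$, we obtain a simultaneous minimal resolution $\widetilde{\cX'}\to T$ (by blowing up a section through the singular points) with $\widetilde{\cX'}_{t_i}\simeq Y_i$ and an exceptional $(-2)$-elliptic curve assembling into a flat divisor $\widetilde{\cE}\subset\widetilde{\cX'}$ over $T$. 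Both $(\cY,\cF)\to S$ and $(\widetilde{\cX'},\widetilde{\cE})\to T$ are then projective flat families of smooth surfaces equipped with distinguished smooth divisors.

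Next I would form the Isom-scheme of pairs
\[
\cI:=\underline{\operatorname{Isom}}_{S\times T}\bigl(\operatorname{pr}_S^*(\cY,\cF),\ \operatorname{pr}_T^*(\widetilde{\cX'},\widetilde{\cE})\bigr),
\]
which is representable by a scheme locally of finite type over $S\times T$. Let $I\subset S\times T$ denote its image under the structure morphism; $I$ is constructible by Chevalley's theorem, and by construction contains every pair $(s_i,t_i)$. Therefore the projection $\operatorname{pr}_S(I)\subset S$ is a constructible subset containing the Zariski dense set $\{s_i\}$, and so must contain a non-empty open subset $U\subset S$. Since $S^\tau$ is countable and $S$ is an irreducible curve, $S^*=S\setminus S^\tau$ is Zariski dense in $S$, so $U\cap S^*$ is non-empty.

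Picking any $s\in U\cap S^*$, there exist $t\in T$ and an isomorphism $\varphi:(\cY_s,\cF_s)\simeq (\widetilde{\cX'}_t,\widetilde{\cE}_t)$. By the uniqueness of the contraction of a connected reduced curve to an isolated point in a proper algebraic space (a consequence of the uniqueness part of Artin's contraction theorem, cf.~Theorem~\ref{Thm:Arin contraction}), $\varphi$ descends to an isomorphism $\cX_s\simeq \cX'_t$ of algebraic spaces. Since $\cX'_t$ is a fiber of the projective family $\cX'\to T$ and therefore projective, $\cX_s$ is projective, contradicting $s\in S^*$. The main obstacle is foundational: one must verify that the simultaneous minimal resolution $\widetilde{\cX'}\to T$ exists as a projective flat family and that the Isom-scheme of marked pairs is representable and locally of finite type over $S\times T$. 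Once these classical ingredients are in place, the argument reduces to a short Chevalley-plus-density step powered by the fact that the projective fibers of $h:\cX\to S$ form a countable dense subset of an irreducible curve.
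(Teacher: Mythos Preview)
Your strategy matches the paper's: both pass to simultaneous minimal resolutions over $S$ and over $T$, and exploit the Zariski density of $S^\tau\subset S$ together with the non-projectivity of $\cX_s$ for $s\in S^*$ to contradict any hypothetical bounding family $\cX'\to T$. The implementations differ. The paper fixes a relatively very ample line bundle on the resolved family $\cY'\to T$ and uses the vanishing $H^1(\cY_s,\cO_{\cY_s})=H^2(\cY_s,\cO_{\cY_s})=0$ to spread a line bundle of matching numerical class across all of $\cY/S$ (after an \'etale cover of $S$); this produces morphisms from both $S$ and $T$ into a common Hilbert scheme $H$ whose image closures coincide, and a common cover $R\to S$, $R\to T$ then identifies fibers directly, so that the pullback to $\cY_s$ of a relatively very ample divisor on $\cX'/T$ contracts $\cF_s$ for every $s$. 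You instead form the Isom-scheme of marked pairs over $S\times T$ and run a Chevalley/density argument. Your route is conceptually cleaner and sidesteps the line bundle extension step; the paper's route is more explicit and makes the matching of fibers completely concrete.

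One point needs care: $\cI\to S\times T$ is only locally of finite type, so Chevalley does not apply as stated and $I$ need not be constructible. The fix is standard: $\cI$ sits in a Hilbert scheme and hence decomposes as a countable disjoint union $\coprod_n\cI_n$ of schemes of finite type over $S\times T$; by pigeonhole some $\operatorname{pr}_S(\operatorname{im}\cI_n)$ already contains infinitely many $s_i$, hence a nonempty open $U\subset S$, and the rest of your argument goes through. As a minor simplification, the divisor marking is unnecessary here: $\cF_s$ is the unique member of $|{-}K_{\cY_s}|$, so any isomorphism $\cY_s\simeq\widetilde{\cX'}_t$ automatically carries $\cF_s$ to $\widetilde{\cE}_t$.
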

\begin{proof}

Let $g:\cY\xrightarrow[]{\pi} \cX \xrightarrow[]{h} S$ be the two families obtained in Lemma \ref{lem:analytic family}, where $\cY$ is projective over $S$ but $\cX$ is not projective over $S$ ($\cX_s$ is projective if and only if $s \in S^\tau$). Suppose that the set $\{\cX_s \mid s \in S^\tau \}$ is bounded. Then, there exists a projective family $h':\cX' \to T$ such that for every $s \in S^\tau$, there exists some $t = t(s) \in T$ with $\cX_s \cong \cX'_t$. We may assume that $\cX' \to T$ is a locally closed subset of the corresponding Hilbert scheme. After replacing $T$ with the closure of  $S^\tau$ in $T$, we may assume that $S^\tau$ is dense in $T$.


After  replacing $T$ with a dense open subset and $S^\tau$ with the corresponding subset, we can take a common minimal resolution of $\cX'_t$, and it is just the blowing up of a section of $\cX'\to T$, and so we get a morphism $\pi':\cY' \to \cX'$ with an irreducible exceptional divisor $\cF'$. Moreover,
for every $s \in S^\tau$, there exists some $t = t(s) \in T$ 
such that $\cY'_t \supset \cF'_t \cong \cF_s \subset \cY_s$, where $\cF_s$ is the strict transform of the elliptic curve $C\subset \bP^2$. 
     \[
\begin{tikzcd}
\mathcal{Y} \arrow[rrrrr, bend left=12]  \arrow[d, "\pi"]&&\cY_s \arrow[d] \arrow[r, phantom, "\simeq"]&\cY'_t\arrow[d] &\cY'\arrow[r] \arrow[d, "\pi'"]&\cU\arrow[dd]\\
 \mathcal{X} \arrow[d, "h"] & & \cX_s\arrow[d]  \arrow[r, phantom, "\simeq"]&\cX'_t \arrow[d]  &\cX' \arrow[d, "h'"]  \\
S \arrow[rrrrr, bend right=14] & S^\tau \arrow[l, phantom, "\mathrel{\supset}"]    & s \arrow[r] \arrow[l, phantom, "\ni"]& t=t(s) \arrow[r, phantom, "\in"]& T \arrow[r]&H
\end{tikzcd}
\]

Fix a relatively very ample divisor $\cA'$ for $g':\cY' \to T$ such that $H^1(\cY_t',\cO_{\cY'_t}(\cA'_t))=0$ for $t\in T$. 
Let $\cU \to H$ be the corresponding universal family over the Hilbert scheme $H$. Then there is a morphism $T \to H$ such that $\cY' = \cU_T:=\cU\times _H T$. For every $s \in S^\tau$, we have a corresponding very ample divisor $\cA_s$ of fixed degree on $\cY_s$. Since $H^i(\cY_s,\mathcal O_{\mathcal Y_s})=0$ for $i=1,2$ and $s\in S$, then $H^1(\cY_s,\mathcal O ^*_{\mathcal Y_s})\cong H^2(\mathcal Y_s,\mathbb Z)$ for $s\in S$.
Thus, after replacing $S$ with an \'etale cover, there exists a line bundle $\cL$ on $\cY$ such that $\cL_s \cong \mathcal{O}_{\cY_s}(\cA_s)$ for every $s \in S^\tau$. 
Since $H^1(\cY_s, \cO_{\cY_s}(\cA_s)) = 0$ for $s \in S^\tau$, by \cite[Proposition 7.7]{nugentHigherDimensionalBrillNoether2024}, after replacing $S$ with an open neighborhood of some $s \in S^\tau$, we may assume that $\cL_s$ is very ample with a fixed degree for all $s \in S$.
This yields a morphism $S \to H$ such that $\cY = \cU_S:=\cU\times _H S$.

 Note that the closures of the images of $S$ and $T$ in $H$ coincide, leading to a common cover $R \to S$ and $R \to T$ such that $\cY_R:=\cY\times_S R$ coincides with $\cY'_R:=\cY'\times_T R$. Consequently, for every $s \in S$, there exists some $t \in T$ with an isomorphism $\cY_s \cong \cY'_t$.

Let $\cN'$ be the pullback of a relative very ample divisor on $\cX'$ to $\cY'$. Then for every $s \in S$, there exists some $t \in T$ such that $\cN'_t \subset \cY'_t$ gives a base point free divisor $\cN_s \subset \cY_s$. However, $|\cN_s|$ contracts $\cF_s$, where $\cF_s$ is the strict transform of the elliptic curve $C\subset \bP^2$, which implies that $\cX_s$ is projective, leading to a contradiction.
\end{proof}


	\begin{rem}

    We say that $X$ is  a \textit{polarized  Calabi-Yau variety} if $X$ is slc, $K_X\qlin 0$, and there exists  an ample $\bQ$-Cartier Weil divisor $M$ on $X$.

    Let $d\in \bN$ and $v\in\bQ^{>0}$. Fix $\dim X=d$ 
and $\vol(M)=v$.  
In this subsection, we construct a set of polarized  Calabi-Yau varieties with $d=2$,  and $v=22$.
Our example shows that to bound $X$, we cannot remove the klt condition in \cite[Corollary 1.6]{birkarGeometryPolarisedVarieties2023}, or the condition that $M\geq 0$ and $(X,tM)$ is slc for some $0<t\ll 1$ in \cite[Corollary 1.8]{birkarGeometryPolarisedVarieties2023}.
	\end{rem}

\begin{exam}
    We say that $(W,D)$ is  a \textit{boundary polarized  Calabi-Yau pair} \cite{ascherModuliBoundaryPolarized2023,blumGoodModuliSpaces2024} if $(W,D)$ is slc, $K_W+D\qlin 0$, and $D$ is an ample $\bQ$-Cartier $\bQ$-divisor on $W$.
     Note that $-K_W$ is ample and so $W$ is an slc Fano variety.

    Let $d\in \bN$ and $v\in\bQ^{>0}$. Fix $\dim W=d$ 
and $\vol(-K_W)=v$. For $(X_i,M_i)\in \cP_0$ constructed in this subsection, let 
$$W_i=C_p(X_i, M_i) := \text{Proj} \bigoplus_{n=0}^\infty \bigoplus_{m=0}^\infty H^0(X_i, \mathcal{O}_{X_i}(mM_i)) \cdot t^n$$
be the projective cone over $X_i$, and $X_i^\infty\simeq X_i$ be the divisor at infinity. Then by  \cite[Proposition 2.20.3]{ascherModuliBoundaryPolarized2023}, $(W_i,X_i^\infty)$ is a boundary polarized Calabi-Yau pair.
Let $\gamma_i:V_i\to W_i$ be the blow up of the vertex with exceptional divisor $X_i^-$, set $X_i^+=\gamma_i^*X_i^\infty$,
and let $\beta_i:V_i\to X_i$ be the associated $\bP^1$-bundle.
Let $\tau_i:U_i\to V_i$ be the resolution of singularities with exceptional divisor $R_{F_i}\simeq \alpha_i^*F_i$ where  $\alpha_i:U_i\to Y_i$ is the associated $\bP^1$-bundle, and set $Y_i^\pm=\tau_i^*X_i^\pm$. 
We have the following commutative diagram.

\[
\begin{tikzcd}
(U_i,R_{F_i}+Y_i^-+Y_i^+) \arrow[r, "\tau_i"] \arrow[d,"\alpha_i"]&(V_i,X_i^-+X_i^+)\arrow[r,"\gamma_i"]\arrow[d,"\beta_i"]&(W_i,X_i^\infty)\\
 (Y_i,F_i+M_{Y_i})\arrow[r, "\pi_i"] & (X_i,M_i) 
\end{tikzcd}
\]

We claim that the Cartier index of $-K_{W_i}$ is unbounded, and hence $(W_i,X_i^\infty)$ is unbounded by the same argument as in \cite[\S5.3, p.20]{birkarVariationsGeneralisedPairs2022}.
Indeed, if the Cartier index of $-K_{W_i}$ is bounded, then by \cite[Corollary 1.6]{jiangBoundednessLogPluricanonical2021}, there exsits $q\in \bN$ such that $q(K_{W_i}+X_i^\infty)\sim 0$ for for all $i$.
Hence, the Cartier index of $X_i^\infty$ is also bounded. Since $X_i^\infty-(-K_{W_i})$ is ample, by the effective base point free theorem \cite[Theorem 1.1]{fujinoEffectiveBasePoint2009} and the very ampleness lemma \cite[Lemma 7.1]{fujinoEffectiveBasepointfreeTheorem2017} for lc pairs, we can find $l\in \bN$ such that $lX_i^\infty$ is very ample for all $i$.
Therefore, $lX_i^+=\gamma_i^*(lX_i^\infty)$ is big and base point free on $V_i$ for all $i$. 
However, since $Y_i^+|_{Y_i^+}\sim M_{Y_i}$, it follows that 
$X_i^+|_{X_i^+}\sim M_i$, which implies that 
the Cartier index of $X_i^+$ is not bounded, leading to a contradiction.

Note that $(U_i,\alpha_i^*F_i+Y_i^-+Y_i^+)$ is a dlt modification of $(W_i,X_i^\infty)$ with minimal non-klt centers $F_i^-:=R_{F_i}\cap Y_i^-\simeq F_i$ and $F_i^+:=R_{F_i}\cap Y_i^+\simeq F_i$. 
Therefore, we obtain an unbounded set of boundary polarized Calabi-Yau 3-fold pairs $(W_i, X_i^\infty)$ such that $\vol(-K_{W_i})=(X_i^\infty)^3=M_i^2=22$ and $\text{reg}(W_i, X_i^\infty)=1$ (see \cite[Deﬁnition 8.5]{ascherModuliBoundaryPolarized2023}). This case is not treated in \cite[\S8]{ascherModuliBoundaryPolarized2023}.
\end{exam}

\subsection{Minimal surfaces of Kodaira dimension one}

	In this subsection, we construct an unbounded set of generalised pairs $(U_i, B_{U_i} + M_{U_i}) \in\cP_1\subset \cF_{glc}(2, \Phi, v)$ for $\Phi = \{0,1
\}$ and $v = 68$, such that $K_{U_i}$ is semiample with $\kappa(U_i)=1$, $B_{U_i} = 0$, and $M_{U_i}$ is an ample Weil divisor with  unbounded Cartier index.

In the example of \S\ref{S:K=0}, let $D \subset \bP^2$ be another smooth elliptic curve, and assume that $p_{i,1}, \ldots, p_{i,9} \in C \cap D$. We first blow up $\bP^2$ at $p_{i,1}, \ldots, p_{i,9}$, giving a pencil spanned by the strict transforms of $C$ and $D$. Then, we further blow up $p_{i,10}$ and $p_{i,11}$ on the strict transform of $C$,  we obtain a surface $Y_i$
with a morphsim $Y_i\to \bP^1$.
Let $M_{Y_i}$ be the big and semiample Cartier divisor on $Y_i$ that defines a contraction $\pi_i : Y_i \to X_i$ which contracts the strict transform $F_i$ of $C$ on $Y_i$. Let $M_i = \pi_{i*} M_{Y_i}$, and let $X_i \to \bP^1$ be the induced morphism.

Let $G_{i,1}$ and $G_{i,2}$ be two general fibers of $Y_i \to \bP^1$ that do not contain $F_i$. Let $\nu_i:V_i\to Y_i$ be the double cover of $Y_i$ branched over $G_{i,1}$ and $G_{i,2}$, then $M_{V_i}: = \nu_i^* M_{Y_i}$ is also a big and semiample Cartier divisor.
Since $Y_i$ is smooth and $G_{i,1}$, $G_{i,2}$ are disjoint smooth fibers, it follows that $V_i$ is also smooth.
Let $\mu_i:U_i\to X_i$ be the double cover of $X_i$ branched over $\pi_{i*} G_{i,1}$ and $\pi_{i*} G_{i,2}$, then $M_{U_i}: = \mu_i^* M_i$ is also ample with unbounded Cartier index.
We have the following commutative diagram, where $\tau_i : \bP^1 \to \bP^1$ is a double cover branched over two points.

\[
\begin{tikzcd}
(V_i,\nu_i^*F_i+M_{V_i}) \arrow[r, "\nu_i"] \arrow[d, "\gamma_i"] & (Y_i,F_i+M_{Y_i}) \arrow[d, "\pi_i"] \\
(U_i,M_{U_i}) \arrow[r, "\mu_i"] \arrow[d] & (X_i,M_i) \arrow[d] \\
\bP^1 \arrow[r, "\tau_i"] & \bP^1
\end{tikzcd}
\]
Since $(Y_i,F_i+\frac{1}{2}(G_{i,1}+G_{i,2})+M_{Y_i})$ is generalised lc, and 
$$K_{V_i}+\nu_i^*F_i+M_{V_i}=\nu_i^*(K_{Y_i}+F_i+\frac{1}{2}(G_{i,1}+ G_{i,2})+M_{Y_i}),$$
by \cite[Proposition 5.20]{kollarBirationalGeometryAlgebraic1998}, $(V_i,\nu_i^*F_i+M_{V_i})$ is also generalised lc.
Since
$$\gamma_i^*K_{U_i}=K_{V_i}+\nu_i^*F_i \quad \text{and} \quad \gamma_i^*M_{U_i}=M_{V_i},$$
it follows that $(U_i,M_{U_i})$ is a generalised lc surface with data $V_i\to U_i$ and $M_{V_i}$.
Moreover,
$$K_{U_i}=\mu_i^*(K_{X_i}+\frac{1}{2}(\pi_{i*} G_{i,1}+\pi_{i*} G_{i,2}))$$
is semiample with $\kappa(K_{U_i})=1$.

Recall that $M_{Y_i}\sim 11H_i-3E_i$, and $G_{i,1}\sim G_{i,2}\sim 3H_i-(E_{i,1}+\cdots+E_{i,9})$.
We have 
$$M_i\cdot \pi_{i*}G_{i,k}=M_{Y_i}\cdot G_{i,k}=33-3\times 9=6$$
for $k=1,2$.
 Now, $K_{U_i}+M_{U_i}$ is ample and 
\begin{align*}
\vol(K_{U_i} + M_{U_i}) &= \mu_i^*(K_{X_i} + \frac{1}{2}(\pi_{i*} G_{i,1} + \pi_{i*} G_{i,2}) + M_i) ^2\\
&= 2 (K_{X_i} + \frac{1}{2}(\pi_{i*} G_{i,1} + \pi_{i*} G_{i,2}) + M_i) ^2\\
&= 2 (2M_i \cdot \pi_{i*} G_{i,2} + M_i^2) \\
&= 2 (12 + 22) \\
&= 68.
\end{align*}
Thus 
	 $$(U_i,M_{U_i})\in\cP_1\subset \cF_{glc}(2,\Phi,v)$$
	 for $\Phi=\{0,1\}$ and $v=68$. 
\begin{rem}

If we choose $p_{i,11}$ such that $\sum_{j=1}^{11}(p_{i,j}-p_0)\in \Pic^0(C)$ is non-torsion, then $U_i$ is not projective. Indeed, assume that $U_i$ is projective, then there exists an ample divisor $N_{U_i}$ on $U_i$. Since the double cover $\mu_i: U_i \to X_i$ is always a Galois cover with group $G = \mathbb{Z}_2$, define $N_{U_i}^G := \frac{1}{|G|} \sum_{g \in G} g^* N_{U_i}$. Then $N_{U_i}^G$ is $G$-invariant, so there exists an ample divisor $N_i$ on $X_i$ such that $N_{U_i}^G = \mu_i^* N_i$. This implies that $X_i$ is projective, which is a contradiction.
 \end{rem}   
\begin{thm}\label{thm:P_1 unbdd}
    The set of   underlying varieties   appearing in $\cP_1$ is not bounded.
\end{thm}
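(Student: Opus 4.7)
The strategy is to adapt the proof of Theorem \ref{thm:P_0 unbdd}, reducing unboundedness of $\{U_i\}$ to that of $\{X_i\}$ via the Galois involutions of the double covers $\mu_i: U_i \to X_i$.

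First, mirroring Lemma \ref{lem:analytic family}, I would construct a family of algebraic spaces $h^U: \cU \to S$ such that the projective surfaces $U_i$ correspond to a countable dense subset $S^\tau \subset S$. Starting from the family $\cX \to S$ of Lemma \ref{lem:analytic family}, enlarged so that both elliptic curves $C$ and $D$ appear as data and the first nine base points lie in $C \cap D$, the strict transforms of $C \times S$ and $D \times S$ give a relative elliptic pencil $\cY \to \bP^1 \times S$. Choosing two disjoint sections of this pencil provides a relative branch divisor, and the associated relative double cover of $\cX \to S$ branched along these produces $\cU \to S$. By the same torsion obstruction as in Remark \ref{rem: non-proj of X_i}, $\cU_s$ is projective if and only if $\cX_s$ is, i.e., if and only if $s \in S^\tau$.

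Next, suppose for contradiction that $\{U_s : s \in S^\tau\}$ is bounded. The Hilbert-scheme argument from Theorem \ref{thm:P_0 unbdd} applies verbatim to the relative double cover $\cV \to S$ of $\cY \to S$, which is a smooth projective family: after replacing $S$ by an \'etale cover and $T$ by the closure of the image of $S^\tau$ under the classifying map, I obtain a projective family $\cU' \to T$ together with identifications $\cU_s \cong \cU'_{t(s)}$ for every $s \in S$.

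Each $\cU_s$ with $s \in S^\tau$ carries a canonical Galois involution $\sigma_s$, namely the deck transformation of $\mu_s$, with quotient $X_s$. Using representability of $\underline{\mathrm{Aut}}(\cU'/T) \to T$ as a separated algebraic space of locally finite type, the dense collection $\{\sigma_{t(s)} : s \in S^\tau\}$ extends, after a further \'etale base change of $T$, to a global relative involution $\sigma$ of $\cU'$ over $T$. Forming the quotient $\cU'/\langle \sigma \rangle \to T$ then yields a projective family whose fiber at $t(s)$ is $X_s$ for every $s \in S^\tau$, contradicting Theorem \ref{thm:P_0 unbdd}. The main obstacle is this last step: producing the global involution from a dense collection of fiberwise involutions, and verifying that the geometric quotient is again a projective family. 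One must rule out monodromy that swaps the two sheets of the cover, possibly by passing to a further finite cover of $T$; alternatively, one can spread the branch divisor $2D_s \subset X_s$ as a divisor in the quotient family and reconstruct the double cover relatively, bypassing the need to extend $\sigma$ directly.
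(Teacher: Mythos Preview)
Your strategy—assume the $U_i$ are bounded, bound the Galois involutions $\alpha_i$, and conclude that the quotients $X_i=U_i/\langle\alpha_i\rangle$ are bounded, contradicting Theorem~\ref{thm:P_0 unbdd}—is exactly the paper's. The difference lies in how the involution is controlled, and the paper's route is far shorter than the $\underline{\Aut}$-scheme argument you sketch.

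The paper observes that $U_i\to\bP^1$ is the Iitaka fibration of $U_i$, hence intrinsic: once the $U_i$ lie in a bounded family, so do these fibrations. By construction $\alpha_i$ covers the deck involution $\beta$ of $\tau_i:\bP^1\to\bP^1$ and is determined by $\beta$ together with the Iitaka fibration. Since involutions of $\bP^1$ with two fixed points form a bounded family, so do the $\alpha_i$, and one passes to quotients in the bounding family. No auxiliary family $\cU\to S$ of algebraic spaces and no spreading over $\underline{\Aut}(\cU'/T)$ are needed; this disposes entirely of what you call the ``main obstacle''.

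Separately, your claim that the Hilbert-scheme argument of Theorem~\ref{thm:P_0 unbdd} ``applies verbatim'' to $\cV\to S$ is not correct. That argument uses $H^i(\cY_s,\cO_{\cY_s})=0$ for $i=1,2$ to identify $\Pic(\cY_s)$ with $H^2(\cY_s,\bZ)$ and thereby spread a line bundle from the dense fibres $s\in S^\tau$ to all of $S$. Here $K_{V_i}=\nu_i^*(K_{Y_i}+\tfrac12(G_{i,1}+G_{i,2}))$ with $K_{Y_i}+G_{i,k}\sim E_{i,10}+E_{i,11}$ effective, so $p_g(V_i)=h^2(V_i,\cO_{V_i})=1\ne 0$; hence $\Pic(V_s)\subsetneq H^2(V_s,\bZ)$ in general and the extension step can fail. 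In any case this step is unnecessary for your own plan, since the involutions you want to spread already live on the bounded family $\cU'\to T$ over the dense locus coming from $S^\tau$.
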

\begin{proof}
Suppose that $U_i$ belongs to a bounded family. The induced fibration $U_i\to \mathbb P^1$ is just the Iitaka fibration and hence it also belongs to a bounded family.
Let $\alpha_i:U_i\to U_i$ be the involution that fixes the ramification divisor of $\mu_i:U_i\to X_i$. From the construction in this subsection, we know that the involution $\alpha_i$ is determined by an involution $\beta:\bP^1\to \bP^1$ that fixes two points of $\bP^1$, and hence $\alpha_i$ is bounded.

It then follows that $X_i=U_i/<\alpha _i >$ is also bounded, which contradicts Theorem \ref{thm:P_0 unbdd}.
\end{proof}

\begin{rem}

    We say that $U$ is  a \textit{stable minimal model} if $U$ is slc, $K_U$ is semiample defining a contraction $f:U\to Z$, and there exists  a $\bQ$-Cartier Weil divisor $M_U$ on $U$ which is ample over $Z$. 
    In particular, $F$ is a polarized Calabi-Yau variety with polarization $M_U|_F$, where $F$ is the general fiber of $f:U\to Z$.

    Let $d\in \bN$, $\sigma\in\bQ[t]$ be a polynomial, and $u\in\bQ^{>0}$.
    Fix $\dim U=d$, $(K_U+tM_U)^d=\sigma(t)$, 
 and $\vol(M_U|_F)=u$. 
 In  this subsection, we have constructed a set of   stable minimal models with $d=2$, $\sigma(t)=24t+44t^2$, and $u=12$.
 Our example  shows that to bound $U$, we cannot remove the klt condition in \cite[Theorem 1.4]{jiangBoundednessModuliTraditional2023}, or the condition that $M_U\geq 0$ and $(U,tM_U)$ is slc for some $0<t\ll 1$ in \cite[Theorem 1.12]{birkarModuliAlgebraicVarieties2022}.
 Moreover, in our example, both the general fiber $F$ and the base $Z$ are bounded, while previously known examples of the failure of boundedness of stable minimal models with fixed $(d,\sigma,u)$ arise from the failure of boundedness of the general fibers.
\end{rem}

\subsection{Minimal surfaces of general type}

	In this subsection, we construct an unbounded set of generalised pairs $(X_i, B_i + M_i) \in\cP_2\subset \cF_{glc}(2, \Phi, v)$ 
such that $K_{X_i}$ is nef and big but not ample, $B_i = 0$, and $M_i$ is an ample divisor with unbounded Cartier index.

    Let $C \subset \mathbb{P}^2$ be a smooth cubic, and take $r$ general lines $L_j \subset \mathbb{P}^2$.
Let $h: W \to \bP^2$ be the blow up of all singular points of $C + \sum L_j$.  
Denote the strict transform of $C + \sum L_j$ by $C' + \sum L_j'$.  
Define $H := h^*\cO_{\bP^2}(1)$.  
For $1 \leq j \leq r$, $1 \leq l \leq 3$, let $E_{jl}$ be the exceptional divisor over $C \cap L_j$.  
Similarly, for $1 \leq j < k \leq r$, let $F_{jk}$ be the exceptional divisor over $L_j \cap L_k$.
    Let $\tau: W \to Z$ be the contraction of  $C' + \sum L_j'$. Then $Z$ has one simple elliptic singularity (the image of $C$) and $r$ quotient singularities (the image of the $L_j$). Moreover,  $K_Z$ is ample if $r \geq 4$.
Since our computation does not coincide with \cite[Example 5]{kollarDeformationsVarietiesGeneral2021} (where it is stated that $K_Z$ is ample if $r \geq 6$), we provide the details of the computation.
\begin{lemma}
      If  $r \geq 4$, then $K_Z$ is ample.
\end{lemma}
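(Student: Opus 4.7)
The plan is to verify ampleness of $K_Z$ via the Nakai--Moishezon criterion on the surface $Z$: show that $K_Z^2>0$ and $K_Z\cdot D>0$ for every irreducible curve $D\subset Z$. Both conditions pull back to intersection computations on $W$, so my first task is to write $\tau^{*}K_Z$ explicitly. The contraction $\tau$ collapses the elliptic curve $C'$ to a simple elliptic singularity (discrepancy $-1$) and the pairwise disjoint smooth rational curves $L_j'$, each with $(L_j')^{2}=-(r+1)$, to cyclic quotient singularities. Adjunction on $L_j'$ gives $K_W\cdot L_j'=r-1$, so the coefficient $\alpha=\tfrac{r-1}{r+1}$ is forced by $\tau^{*}K_Z\cdot L_j'=0$, yielding
\[
\tau^{*}K_Z \;=\; K_W + C' + \frac{r-1}{r+1}\sum_j L_j'.
\]
Substituting $K_W=-3H+\sum_{j,l}E_{jl}+\sum_{j<k}F_{jk}$ and $C'=3H-\sum_{j,l}E_{jl}$ yields the clean cancellation $K_W+C'=\sum_{j<k}F_{jk}$, and therefore the effective form
\[
\tau^{*}K_Z \;=\; \sum_{j<k}F_{jk} \;+\; \frac{r-1}{r+1}\sum_j L_j'.
\]

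From this expression I can immediately read off $\tau^{*}K_Z\cdot E_{jl}=\tfrac{r-1}{r+1}$, $\tau^{*}K_Z\cdot F_{jk}=\tfrac{r-3}{r+1}$, and $\tau^{*}K_Z\cdot C'=\tau^{*}K_Z\cdot L_j'=0$, all non-negative precisely when $r\geq 4$. Squaring the formula produces $K_Z^{2}=\tfrac{r(r-1)(r-3)}{2(r+1)}>0$ for $r\geq 4$, so bigness is settled.

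The main obstacle is verifying strict positivity $\tau^{*}K_Z\cdot D'>0$ for every irreducible $D'\subset W$ distinct from $C'$ and the $L_j'$. The cases $D'=E_{jl}$ and $D'=F_{jk}$ are handled above, so assume $D'$ is the strict transform of an irreducible plane curve $D\subset\bP^{2}$ of some degree $n$ with $D\neq C,L_1,\dots,L_r$. Both sums in
\[
\tau^{*}K_Z \cdot D' \;=\; \sum_{j<k}(D'\cdot F_{jk}) \;+\; \frac{r-1}{r+1}\sum_j(D'\cdot L_j')
\]
are then non-negative, and vanishing would force $D$ to avoid every node $L_j\cap L_k$ and, via $D'\cdot L_j'=n-\sum_l\mathrm{mult}_{p_{jl}}(D)=0$, to meet each $L_j$ only at $C\cap L_j$ (here I use $(D\cdot L_j)_p\geq\mathrm{mult}_p(D)$, with equality forced by B\'ezout $D\cdot L_j=n$). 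Since each $p_{jl}$ lies on $C$, a second application of B\'ezout gives
\[
3n \;=\; D\cdot C \;\geq\; \sum_{j,l}\mathrm{mult}_{p_{jl}}(D) \;=\; \sum_j n \;=\; rn,
\]
forcing $r\leq 3$ and contradicting $r\geq 4$. This combinatorial B\'ezout step is precisely where the hypothesis $r\geq 4$ is genuinely used, and it matches the threshold that already appears in the positivity of $\tau^{*}K_Z\cdot F_{jk}$ and $K_Z^{2}$.
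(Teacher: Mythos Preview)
Your proof is correct. Both you and the paper compute the same discrepancies and arrive at the same value $K_Z^2=\tfrac{r(r-1)(r-3)}{2(r+1)}$, but the Nakai--Moishezon check is organised differently. The paper rewrites the pullback as
\[
\tau^*K_Z \;=\; \frac{r-3}{2}H \;+\; \frac{1}{2}C' \;+\; \frac{r-3}{2(r+1)}\sum_j L_j',
\]
so that the nef class $H$ and the effective classes $C',L_j'$ (all with non-negative coefficients once $r\geq 4$) immediately give positivity against any irreducible curve other than $C'$ or the $L_j'$. You instead exploit the simpler cancellation $K_W+C'=\sum_{j<k}F_{jk}$ to obtain $\tau^*K_Z=\sum F_{jk}+\tfrac{r-1}{r+1}\sum L_j'$, then handle exceptional curves by direct intersection and non-exceptional curves by a B\'ezout argument forcing $3n\geq rn$. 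The paper's route is shorter for curves dominating a plane curve (the $H$-term does all the work), whereas your approach is more explicit and in fact more careful on the exceptional locus: your formula gives $\tau^*K_Z\cdot F_{jk}=\tfrac{r-3}{r+1}$, which is positive but smaller than the uniform lower bound $\tfrac{r-3}{2}$ asserted in the paper, so your case-by-case treatment is arguably cleaner there.
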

\begin{proof}

Note that we have the following relations:
\begin{equation} \label{eq:K_W}
K_W = h^*K_{\bP^2} + \sum_{1\leq j\leq r, 1\leq l\leq3} E_{jl} + \sum_{1\leq j<k\leq r}F_{jk},
\end{equation}  

\begin{equation} \label{eq:C_prime}
C' = 3H - \sum_{1\leq j\leq r, 1\leq l\leq3} E_{jl},
\end{equation}  

\begin{equation} \label{eq:L_j_prime}
L_j' = H - \sum_{1\leq l\leq 3}E_{jl} - \sum_{1\leq k\leq r, k\neq j}F_{jk}.
\end{equation}  
From (\ref{eq:C_prime}) and (\ref{eq:L_j_prime}), we deduce that
\begin{equation} \label{eq:F_jk}
2\sum_{1\leq j<k\leq r}F_{jk} = \sum_{1\leq j\leq r} (H - \sum_{1\leq l\leq 3}E_{jl} - L_j') = rH - (3H - C') - \sum_{1\leq j\leq r}L_j'.
\end{equation}  
Then by (\ref{eq:K_W}), (\ref{eq:C_prime}), and (\ref{eq:F_jk}), we have
\begin{align*}
    K_W = \frac{r-3}{2}H - \frac{1}{2}C' - \frac{1}{2} \sum_{1\leq j\leq r}L_j'.
\end{align*} 
Since $L_j'$ is a rational curve with $(L'_j)^2=1-3-(r-1)=-1-r$, the log discrepancy is given by $a(L_j',Z,0)=\frac{2}{r+1}$.
Thus, we have
\begin{align}
\tau^*K_Z &= K_W + C' + \sum_{1\leq j\leq r} (1 - \frac{2}{r+1}) L_j'   \notag \\
           &= \frac{r-3}{2}H + \frac{1}{2}C' + \frac{r-3}{2(r+1)} \sum_{1\leq j\leq r}L_j'.  \notag 
\end{align}
 If $r\geq 4$, then for any irreducible curve $\widetilde{C}$ such that $\widetilde{C} \neq C'$ and $\widetilde{C} \neq L_j'$ for all $j$,  we have $\tau^*K_Z\cdot \widetilde{C}\geq \frac{r-3}{2}>0$. Since $\tau^*K_Z\cdot C'=\tau^*K_Z\cdot L_j'=0$, it follows that if $r\geq 4$, then
\begin{align}
(\tau^*K_Z)^2 &= ( K_W + C' + \sum_{1\leq j\leq r} ( 1 - \frac{2}{r+1} ) L_j' ) \cdot K_W \notag \\
&= K_W^2 + -C'^2 + \sum_{1\leq j\leq r} ( 1 - \frac{2}{r+1} )(-2 - L_j'^2) \notag \\
&= (9 - 3r - \frac{r(r-1)}{2}) + (3r - 9) + r(1 - \frac{2}{r+1})(-2 + r + 1) \notag \\
&= \frac{r(r-1)(r-3)}{2(r+1)}>0. \notag
\end{align}
Thus, by the Nakai-Moishezon criterion, $K_Z$ is ample if $r\geq 4$.
\end{proof}

Pick a point $p_i\in C'\setminus(C'\cap\sum L_j')\subset W$ such that $p_i$ is torsion with order $n_i$. 
Let $g_i:Y_i \to W$ be the blow up of $W$ at $p_i$ with exceptional divisor $E_i$, 
and denote by $C_i' + \sum L_{ij}'$ the strict transform of $C' + \sum L_j'$ on $Y_i$. 
Then $g_i^*C'=C_i'+E_i$.
Define $H_i := g_i^*H$, $E_{ijl} := g_i^*E_{jl}$ for $1 \leq j \leq r$, $1 \leq l \leq 3$,  
and $F_{ijk} := g_i^*F_{jk}$ for $1 \leq j < k \leq r$.
    Let $\pi_i: Y_i \to X_i$ be the contraction of  $C_i' + \sum L_{ij}'$.
    Define $E_{X_i} := \pi_{i*} E_i$. Then $f_i: X_i \to Z$ is a contraction with a single exceptional divisor $E_{X_i}$.
    We have the following commutative diagram.
\[
\begin{tikzcd}
(Y_i\supset H_i,C_i',L_{ij}',E_i,E_{ijl},F_{ijk}) \arrow[r, "g_i"] \arrow[d,"\pi_i"]&(W\supset H,C',L_j',E_{jl},F_{jk}) \arrow[r,"h"]\arrow[d,"\tau"]&\bP^2\\
 (X_i\supset E_{X_i})\arrow[r, "f_i"] & Z 
\end{tikzcd}
\]
\begin{lemma}
    $K_{X_i}$ is nef and big but not ample.
\end{lemma}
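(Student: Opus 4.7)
The plan is to establish the equality $K_{X_i} = f_i^* K_Z$, from which the conclusion follows at once. Since $r \geq 4$, $K_Z$ is ample by the previous lemma, so $f_i^* K_Z$ is nef and big as the pullback of an ample divisor under a proper birational morphism. Moreover, $K_{X_i} \cdot E_{X_i} = K_Z \cdot f_{i*} E_{X_i} = 0$ since $f_i$ contracts $E_{X_i}$ to the elliptic singular point $\tau(C') \in Z$, so $K_{X_i}$ is not ample.

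To prove the identity $K_{X_i} = f_i^* K_Z$, I would compare both sides after pulling back to $Y_i$ via $\pi_i$. Starting from the formula
$$\tau^* K_Z = K_W + C' + \frac{r-1}{r+1}\sum_{j=1}^r L_j'$$
established in the previous lemma and applying $g_i^*$, the key calculation uses $g_i^* K_W = K_{Y_i} - E_i$, $g_i^* C' = C_i' + E_i$ (as $p_i$ is a smooth point of $C'$), and $g_i^* L_j' = L_{ij}'$ (as $p_i \notin L_j'$). The $E_i$ contributions cancel, giving
$$\pi_i^* f_i^* K_Z \;=\; g_i^* \tau^* K_Z \;=\; K_{Y_i} + C_i' + \frac{r-1}{r+1}\sum_{j=1}^r L_{ij}'.$$

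The right-hand side is a $\bQ$-divisor of the form $K_{Y_i}$ plus a correction supported on the $\pi_i$-exceptional locus $C_i' \cup \bigcup_j L_{ij}'$, and it intersects trivially with every $\pi_i$-exceptional curve, since all such curves are contracted to points by $\tau\circ g_i = f_i\circ\pi_i$. Because $X_i$ has only log canonical (simple elliptic and quotient) surface singularities, $K_{X_i}$ is $\bQ$-Cartier, and its pullback $\pi_i^* K_{X_i}$ is the unique divisor of the above form, by the negative definiteness of the intersection matrix of the contracted curves applied to each connected component of the exceptional locus. Hence $\pi_i^* K_{X_i} = \pi_i^* f_i^* K_Z$, and pushing forward via $\pi_{i*}$ yields $K_{X_i} = f_i^* K_Z$. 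The crux of the argument is the cancellation of the $E_i$ contributions in $g_i^* \tau^* K_Z$, which reflects the fact that $E_i$ is a crepant divisor over the elliptic singularity of $Z$; everything else is a bookkeeping computation using the blow up formulas for $K_W$, $C'$, and $L_j'$ on $W$.
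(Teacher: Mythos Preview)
Your proof is correct and follows essentially the same approach as the paper. The paper writes $K_{X_i}=f_i^*K_Z+mE_{X_i}$ and then, using exactly the pullback relations you list (including $g_i^*C'=C_i'+E_i$ and $K_{Y_i}=g_i^*K_W+E_i$, whose $E_i$ contributions cancel), together with the formula $\pi_i^*K_{X_i}=K_{Y_i}+C_i'+\tfrac{r-1}{r+1}\sum_j L_{ij}'$, deduces $m=0$; your version simply packages the identification of $\pi_i^*K_{X_i}$ via the negative definiteness/uniqueness argument rather than quoting the log discrepancies directly, which is a cosmetic difference.
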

\begin{proof}
  We may write
$$K_{X_i} = f_i^*K_Z + mE_{X_i}$$
for some $m \in \bQ$. Then, together with the following relations:
    $$K_W + C' + \sum_{1\leq j\leq r} (1 - \frac{2}{r+1}) L_j' =\tau^*K_Z,$$
    $$K_{Y_i}=g_i^*K_W+E_i,$$
    \begin{equation}
K_{Y_i} + C_i' + \sum_{1\leq j\leq r} (1 - \frac{2}{r+1}) L_{ij}' = \pi_i^* K_{X_i}, \label{eq:KY_C_L}
\end{equation}
    $$g_i^*C'=E_i+C_i',$$
    $$g_i^*L_j'=L_{ij}',$$
    we deduce that $m = 0$, hence $K_{X_i} = f_i^*K_Z$ is nef and big. Moreover, since
$K_{X_i} \cdot E_{X_i} = 0$,
it follows that $K_{X_i}$ is not ample.
\end{proof}

We aim to construct an ample $\bQ$-Cartier divisor $M_i$ on $X_i$ such that its Cartier index is unbounded. The argument is similar to  Lemma \ref{lem:bpf}.

Fix $r\geq 4$ and   let $d\geq 2(3r+\binom{r}{2}+2)$ be a fixed positive integer. Then, by Theorem \ref{thm:arbitrary blow up P2},
 $$A_i:=dH_i-\sum E_{ijl}-\sum F_{ijk}-E_i$$ is very ample on $Y_i$.
Let  $(a, b)$ be the unique solution of the system of linear equations
$$
\left\{
\begin{aligned}
(A_i + a C_i' + b \sum L_{ij}') \cdot C_i' = 0, \\
(A_i + a C_i' + b \sum L_{ij}') \cdot \sum L_{ij}' &= 0,
\end{aligned}
\right.
$$
and let $q\in \bN$ such that $M_{Y_i}:=q(A_i + a C_i' + b \sum L_{ij}')$ is an integral divisor. Since $a=-A_i\cdot C_i'/(C_i')^2>0$ and $b=-A_i\cdot L'_{ij}/(L'_{ij})^2>0$ for any $j$, it follows that 
 $M_{Y_i} \cdot \widetilde{C}_i > 0$ for any irreducible curve $\widetilde{C}_i$ on $Y_i$ with $\widetilde{C}_i \neq C_i'$ and $\widetilde{C}_i \neq L_{ij}'$ for all $j$, and hence $M_{Y_i}$ is nef. Moreover, since $M_{Y_i}$ is the sum of an ample divisor and  effective divisors, it follows that $M_{Y_i}$ is big.  Let $m_i\in \bN$ such that $\cO_{Y_i}(m_iM_{Y_i})\otimes\cO_{C_i'}\simeq \cO_{C_i'}$, where $m_i$ depends on the torsion order of $p_i$.

\begin{lemma}
   The linear system $|m_iM_{Y_i}|$ is base point free. 
\end{lemma}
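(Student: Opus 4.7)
The plan is to mimic the argument in Lemma \ref{lem:bpf}, accommodating the presence of the rational $\pi_i$-exceptional curves $L_{ij}'$ in addition to the elliptic curve $C_i'$. Set $D := C_i' + \sum_j L_{ij}'$. A key preliminary observation is that the components of $D$ are pairwise disjoint on $Y_i$: since all three points of each intersection $C\cap L_j$ have been blown up (producing $E_{jl}$), we have $C'\cdot L_j' = 0$; the unique point $L_j\cap L_k$ has been blown up (producing $F_{jk}$), so $L_j'\cdot L_k' = 0$; and the extra blow up at $p_i\in C'\setminus\sum L_j'$ preserves these disjointnesses. I would first show that the base locus of $|m_iM_{Y_i}|$ lies in $D$: since $a,b>0$ and $A_i$ is very ample by Theorem \ref{thm:arbitrary blow up P2}, the decomposition
$$m_iM_{Y_i} = m_iqA_i + m_iq\bigl(aC_i' + b\sum_j L_{ij}'\bigr)$$
writes $|m_iM_{Y_i}|$ as the sum of the base-point-free $|m_iqA_i|$ and a fixed effective divisor supported on $D$.

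Next I would show $\mathcal{O}_{Y_i}(m_iM_{Y_i})|_{2D}\simeq \mathcal{O}_{2D}$. Componentwise triviality on $D$ itself is immediate: on $C_i'$ by the choice of $m_i$, and on each $L_{ij}'\cong\bP^1$ because $M_{Y_i}\cdot L_{ij}' = 0$. To extend to the thickening $2D$, I would identify the kernel of $\operatorname{Pic}(2D) \to \operatorname{Pic}(D)$ with $H^1(D,\mathcal{O}_D(-D|_D))$ via the exponential sequence, and decompose it by disjointness into a summand on each component. Each summand vanishes: on the elliptic curve $C_i'$ the bundle $\mathcal{O}(-C_i'|_{C_i'})$ has positive degree $3r-8\geq 4$, and on $L_{ij}'$ it is $\mathcal{O}_{\bP^1}(r+1)$.

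The final step is to lift the unit section of $\mathcal{O}_{2D}$ via the short exact sequence
$$0 \to \mathcal{O}_{Y_i}(m_iM_{Y_i}-2D) \to \mathcal{O}_{Y_i}(m_iM_{Y_i}) \to \mathcal{O}_{2D} \to 0,$$
to obtain a section of $m_iM_{Y_i}$ nowhere vanishing on $D$; combined with the first step this gives base point freeness. The lifting requires $H^1(Y_i,m_iM_{Y_i}-2D)=0$, which I would establish by Kawamata-Viehweg vanishing after using the ramification formula \eqref{eq:KY_C_L} to rewrite
$$m_iM_{Y_i}-2D-K_{Y_i}= m_iM_{Y_i} - C_i' - \tfrac{2}{r+1}\sum_j L_{ij}' - \pi_i^*K_{X_i}.$$
Bigness is immediate from bigness of $m_iM_{Y_i}$, and nefness against $C_i'$, each $L_{ij}'$, and the $g_i$-exceptional curves $E_i, E_{ijl}, F_{ijk}$ is a direct numerical verification using $r\geq 4$ and $a,b>0$. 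The main obstacle is nefness against an arbitrary irreducible curve $\widetilde{C}$ not in this list, where $-\pi_i^*K_{X_i}\cdot\widetilde{C}$ can be negative. I would handle this by upgrading the triviality on $2D$ to triviality on the formal neighborhood of $D$: iterating the same $H^1$-vanishing computation shows $\operatorname{Pic}((n{+}1)D)\to\operatorname{Pic}(nD)$ is an isomorphism for every $n\geq 1$, so by Grothendieck's existence theorem and the rationality of the quotient singularities one obtains $m_iM_{Y_i}=\pi_i^*(m_iM_i)$ for a Cartier divisor $m_iM_i$ on $X_i$. Then $m_iM_{Y_i}\cdot\widetilde{C} = m_iM_i\cdot\pi_{i*}\widetilde{C}$, and the required nefness reduces to ampleness of $m_iM_i-K_{X_i}$ on $X_i$, which holds once $m_i$ is sufficiently large. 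Since the torsion order $m_i$ tends to infinity with $i$, this is enough for the unboundedness conclusion pursued in the rest of the section.
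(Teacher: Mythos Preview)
Your strategy follows the template of Lemma~\ref{lem:bpf}, and you correctly identify the disjointness of the components of $D$ and the isomorphisms $\Pic((n{+}1)D)\to\Pic(nD)$ for all $n\geq 1$. But your choice to peel off only $2D$ creates an unnecessary obstacle that you never fully clear. The paper instead subtracts the entire effective part $G_i := m_i q\bigl(aC_i' + b\sum_j L_{ij}'\bigr)$, so that
\[
m_iM_{Y_i}-G_i = m_iqA_i
\]
is a multiple of the very ample divisor $A_i$. Then $H^1(Y_i,m_iqA_i)=0$ is immediate from Kodaira vanishing, since $m_iqA_i-K_{Y_i}$ is ample by Theorem~\ref{thm:arbitrary blow up P2}. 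The triviality $\cO_{Y_i}(m_iM_{Y_i})\otimes\cO_{G_i}\simeq\cO_{G_i}$ follows from exactly the iterated $\Pic$-isomorphisms you already establish, applied componentwise up to multiplicities $m_iqa$ and $m_iqb$ rather than stopping at $2$. With that one change your argument goes through for the specific $m_i$ in the statement, with no detour.

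By contrast, your route forces you to prove $H^1(Y_i,m_iM_{Y_i}-2D)=0$, and here the Kawamata--Viehweg step does not close. First, your displayed identity is off: using \eqref{eq:KY_C_L} one gets
\[
m_iM_{Y_i}-2D-K_{Y_i}=m_iM_{Y_i}-C_i'-\Bigl(1+\tfrac{2}{r+1}\Bigr)\sum_j L_{ij}'-\pi_i^*K_{X_i},
\]
not coefficient $\tfrac{2}{r+1}$. More seriously, the term $-\pi_i^*K_{X_i}$ is anti-nef (since $K_{X_i}=f_i^*K_Z$ with $K_Z$ ample), so nefness against a general curve is genuinely problematic. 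Your workaround via Grothendieck existence and ampleness of $m_iM_i-K_{X_i}$ only yields the conclusion for $m_i$ sufficiently large, which is a weakening of the lemma as stated; the threshold depends on $X_i$, and there is no a priori comparison with the torsion order. So while your sketch might be salvaged for the eventual unboundedness application, it does not prove the lemma itself. The fix is one line: replace $2D$ by $G_i$.
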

\begin{proof}

Let $D_i$ be either the curve $C_i'$ or $L_{ij}'$, since $D_i^2<0$ and $g(D_i)\leq 1$, we have $H^1(D_i, \cI_{D_i}^t/\cI_{D_i}^{t+1} )=0$ for $t\geq 1$, where $\cI_{D_i}:=\cO_{Y_i}(-D_i)$ is the ideal sheaf of $D_i$ in $Y_i$.
Thus, the cohomology sequence induced by the truncated exponential exact sequences
    $$0\to \cI_{D_i}^t/\cI_{D_i}^{t+1} \to  \cO_{(t+1)D_i}^*\to \cO_{tD_i}^*\to 0$$
    yields the fact that the restriction maps
	 $$
\Pic((t+1)D_i) \to \Pic(tD_i)$$
are isomorphisms for all $t\geq 1$.
    Define $G_i:=m_iq(a C_i' + b \sum L_{ij}')$. 
    Since 
$$\cO_{Y_i}(m_iM_{Y_i})\otimes\cO_{\Supp  (G_i)}\simeq \cO_{\Supp  (G_i)},$$
 we have
$$\cO_{Y_i}(m_iM_{Y_i})\otimes\cO_{G_i}\simeq \cO_{G_i}.$$
Thus, we obtain the following short exact sequence
$$
0 \to \mathcal{O}_{Y_i}(m_iM_{Y_i} - G_i) \to \mathcal{O}_{Y_i}(m_iM_{Y_i}) \to \mathcal{O}_{G_i} \to 0.
$$
This gives the long exact sequence 
$$
H^0(Y_i, \cO_{Y_i}(m_iM_{Y_i})) \to H^0(G_i, \cO_{G_i}) \to H^1(Y_i, \cO_{Y_i}(m_iqA_i)).
$$
Since $m_iqA_i - K_{Y_i}$ is very ample by Theorem \ref{thm:arbitrary blow up P2}, it follows that  $H^1(Y_i, \cO_{Y_i}(m_iqA_i)) = 0$ by Kodaira's vanishing.
Therefore, $H^0(Y_i, \cO_{Y_i}(m_iM_{Y_i})) \to H^0(G_i, \cO_{G_i})$ is surjective, and hence the base points of $|m_iM_{Y_i}|$ are not contained in $\Supp (G_i)$. Moreover, since $A_i$ is very ample, any base point of $|m_iM_{Y_i}|$ must be contained in $C_i'$ or $L_{ij}'$.    Thus, $|m_iM_{Y_i}|$ is base point free.  
\end{proof}

Now, let $M_i := \pi_{i*}M_{Y_i}$ be the corresponding ample divisor on $X_i$, where the  Cartier index of $M_i$ is unbounded. Then $M_{Y_i}= \pi^*M_i$, and it follows from
  by (\ref{eq:KY_C_L}) that
 $(X_i, M_i)$ is a generalised lc surface with data $Y_i \to X_i$ and $M_{Y_i}$. 
Moreover,  $K_{X_i}+M_i$ is ample and 
$\vol(K_{X_i}+M_i)$ is fixed.
Thus,
$$(X_i,M_i)\in\cP_{2}\subset \cF_{glc}(2,\Phi,v)$$
	 for  $\Phi=\{0,1\}$ and some fixed $v$. 

\begin{thm}\label{thm:P_2 unbdd}
    The set of   underlying varieties   appearing in $\cP_2$ is not bounded.
\end{thm}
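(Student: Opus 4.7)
The strategy parallels Proposition \ref{prop: M unbdd}: pass to the minimal resolution and translate boundedness into a constraint on $p_i \in C$ that is incompatible with $n_i \to \infty$. Assume for contradiction that $\{X_i\}$ is bounded. Since each $X_i$ has one simple elliptic singularity together with $r$ quotient singularities of a fixed analytic type (their dual graphs being determined), the minimal resolutions $\{Y_i\}$ also form a bounded family; for instance, this follows by applying Theorem \ref{thm:bir bdd} to $(X_i, M_i)$ and then contracting the uniformly bounded collection of $(-1)$-curves on the log resolutions lying strictly above the log canonical places of $X_i$.

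By the definition of boundedness, there exist very ample divisors $A_i$ on $X_i$ with $A_i^2 \leq r$ for a fixed $r \in \bN$. Set $L_i := \pi_i^* A_i$, a nef Cartier divisor on $Y_i$ with $L_i^2 \leq r$ satisfying $L_i \cdot C_i' = L_i \cdot L_{ij}' = 0$, $L_i|_{C_i'} \simeq \cO_{C_i'}$, and $g_i := L_i \cdot E_i = A_i \cdot \pi_{i*} E_i > 0$ because $\pi_i(E_i)$ is a curve. Since on a bounded family of smooth surfaces the integer nef classes with bounded self-intersection form a finite set, the coefficients of
$$L_i = d_i H_i - \sum_{j,l} e_{ijl} E_{ijl} - \sum_{j<k} f_{ijk} F_{ijk} - g_i E_i$$
take only finitely many values.

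The final step exploits the condition $L_i|_{C_i'} \sim 0$ on the elliptic curve $C_i' \cong C$. Since $F_{jk} \cap C' = \emptyset$, $E_{jl}|_{C'} \sim q_{jl}$ with $q_{jl} := C \cap L_j$ fixed, and $E_i|_{C_i'} \sim p_i$, this translates in $\Pic^0(C)$ into
$$g_i(p_i - p_0) \sim d_i(D_H - 3 p_0) - \sum_{j,l} e_{ijl}(q_{jl} - p_0),$$
where $D_H := \cO_{\bP^2}(1)|_C$. The right-hand side takes only finitely many values, and for each such value and each $g_i > 0$ from the finite set, there are at most $g_i^2$ solutions $p_i$ (the $g_i$-torsion translates on the elliptic curve). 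Hence $\{p_i\}$ would be finite, contradicting $n_i \to \infty$.

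The main obstacle is the opening reduction: verifying that boundedness of $\{X_i\}$ yields boundedness of $\{Y_i\}$ together with control of the numerical classes $[L_i]$, so that the coefficients $(d_i, e_{ijl}, f_{ijk}, g_i)$ are genuinely bounded integers. Once this is granted, the torsion argument on $C$ closes the proof.
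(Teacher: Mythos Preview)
Your approach follows the pattern of Proposition~\ref{prop: M unbdd} and Theorem~\ref{thm:P<0 unbdd}, but the key step---that on a bounded family of smooth surfaces the integer nef classes with bounded self-intersection form a finite set---is unjustified and in general false. On a single surface with Picard rank $\geq 3$ there can be infinitely many integral nef classes $L$ with any fixed value $L^2>0$ (think of K3 surfaces with infinite automorphism group, or rational surfaces with infinitely many $(-1)$-curves). Here $\rho(Y_i)=2+3r+\binom{r}{2}\geq 20$, and the effective cone of $Y_i$ genuinely varies with the position of $p_i$. Nothing in your setup bounds $L_i\cdot A$ for a fixed very ample $A$ on $Y_i$: Hodge index gives only the lower bound $(L_i\cdot A)^2\geq L_i^2\cdot A^2$. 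In Theorem~\ref{thm:P<0 unbdd} the direct computation succeeds only because $\rho(Y_i)=3$, so the single relation $H_{Y_i}\cdot G_i^-=0$ together with the restriction condition already forces $H_i^2\to\infty$; no analogous reduction is available here. You flag this as the ``main obstacle,'' but it is not a technicality to be verified---it is the heart of the difficulty, and exactly why the paper remarks after Proposition~\ref{prop: M unbdd} that ``the failure of boundedness of the underlying varieties is more subtle.''

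The paper's proof takes a completely different route. One observes that if $p_i\in C'$ is chosen to be a \emph{non-torsion} point, then $X_i$ still exists as an algebraic space (via Theorem~\ref{Thm:Arin contraction}) but is not projective, by the same argument as in Remark~\ref{rem: non-proj of X_i}. Since $H^1(Y_i,\cO_{Y_i})=H^2(Y_i,\cO_{Y_i})=0$, the machinery of Lemma~\ref{lem:analytic family} and Theorem~\ref{thm:P_0 unbdd} applies verbatim: the $X_i$ fit into a family of algebraic spaces over a curve $S$, with the projective fibers corresponding to a countable dense subset $S^\tau\subset S$. If these projective fibers were bounded, a Hilbert-scheme comparison and the vanishing of $H^i(\cO_{Y_s})$ would spread a relatively very ample line bundle across all of $S$, forcing every $X_s$ to be projective---contradicting the non-projectivity at non-torsion points.
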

\begin{proof}
    If we choose a non-torsion point $p_i \in C' \setminus (C' \cap \sum L_j') \subset W$, 
then  by the same argument as in Remark \ref{rem: non-proj of X_i},  $X_i$ is non-projective. Moreover, we have
$$H^1(Y_i,\cO_{Y_i})=H^2(Y_i,\cO_{Y_i})=0$$ 
for every choice of $p_i \in C' \setminus (C' \cap \sum L_j') \subset W$.
Therefore, we can use the same argument as in Theorem \ref{thm:P_0 unbdd} to show that $\cP_{2}$ is not bounded.
\end{proof}

\begin{rem}
Given a variety $Z$ and a birational contraction $f:X\to Z$, we can write
$$K_X+B=f^*K_Z$$
for some uniquely determined $B$. We say $(X,B)$ is a \textit{crepant model} of $Z$ if $B\geq 0$.
    By the birational case of \cite[Theorem 1.2]{birkarBoundednessFanoType2024}, we know that 
    if $Z$ is $\epsilon$-lc for some fixed $\epsilon\in\bQ^{>0}$ and it belongs to a bounded family, 
    then the underlying varieties of all the crepant models of $Z$ form a bounded family. Our example in this subsection shows that
    the $\epsilon$-lc condition is necessary.
\end{rem}

\subsection{Weak Fano surfaces}
	In this subsection, we construct an unbounded set of generalised pairs $(X_i, B_i + M_i) \in\cP_{-\infty}\subset \cF_{glc}(2, \Phi, v)$
     for $\Phi=\{0,\frac{1}{8}\}$ and $v=\frac{45}{16}$, 
such that $-K_{X_i}$ is nef and big but not ample, $B_i = 0$, and $M_i$ is an ample $\bQ$-divisor with  unbounded Cartier index. Our example is inspired by \cite[2.19]{kollarLecturesResolutionSingularities2007}.

Let $C\subset\bP^2$ be a smooth elliptic curve,  
and $Z$ be the projective cone  over $C$. Then $Z$ is a Fano surface with $\rho(Z) = 1$, and $K_Z + C^\infty \qlin 0$, where $C^\infty$ is the section of $Z$ at infinity.
Let $\tau:W\to Z$ be the blow up of the vertex with exceptional divisor $C^-$, and $W\to C$ be the associated $\bP^1$-bundle.
Fix $p_0\in C^-$, then we can embed $C^-$ into $\mathbb{P}^2$ such that $p_0$ serves as the identity element of the group structure on $C^-$.
Let $C^+$ be the positive section of $W\to C$, and $F$ be a fiber of $W\to C$.
By \cite[Proposition \uppercase\expandafter{\romannumeral5}.2.3]{hartshorneAlgebraicGeometry1977}, we have $\text{Pic}(W)=\bZ C^-\oplus \bZ F$, and $C^+=C^-+3F$.
Moreover, We have 
$$
-(C^-)^2 = (C^+)^2 = 3, \quad C^+ \cdot F = C^- \cdot F = 1, \quad \text{and} \quad F^2 = 0.
$$

Pick a point $p_i\in C^-\subset W$ such that  the divisor $p_i-p_0\in \Pic^0(C^-)$ has order $n_i$. 
Let $g_i:Y_i\to W$ be the  blow up of $W$ at $p_i$ with exceptional divisor $E_i$. 
Then $Y_i$ is bounded by \cite[Lemma 3.8]{alexeevBoundednessLogSurfaces1994}.
Denote $C_i^\pm := g_i^*C^\pm$ and $F_i := g_i^*F$ on $Y_i$. Then we have $C_i^- = E_i + G_i^-$, where $G_i^-$ is the strict transform of $C^-$ on $Y_i$. 
Since 
$$-3=(C_i^-)^2=(E_i + G_i^-)^2=-1+2+(G_i^-)^2,$$ it follows that 
$(G_i^-)^2=-4$.
By \cite[Proposition \uppercase\expandafter{\romannumeral5}.3.2]{hartshorneAlgebraicGeometry1977}, we have
$$\Pic(Y_i)=\bZ C_i^-\oplus \bZ F_i \oplus \bZ E_i.$$
Let $\pi_i: Y_i \to X_i$ be the contraction that contracts $G_i^-$. Define $E_{X_i} := \pi_{i*} E_i$. Then $f_i: X_i \to Z$ is a contraction with a single exceptional divisor $E_{X_i}$.
Write $\pi_i^*E_{X_i} = E_i + nG_i^-$. From $\pi_i^*E_{X_i} \cdot G_i^- = 0$, we deduce that $1 + n(-4) = 0$, hence $n = \frac{1}{4}$.
Therefore, 
$$(E_{X_i})^2=(E_i + \frac{1}{4}G_i^-)^2=(-1+\frac{1}{2}+\frac{1}{16}(-4))=-\frac{3}{4}.$$
Let  $C_{X_i}^+ := f_i^* C^\infty$.
We have the following commutative diagram.
\[
\begin{tikzcd}
(Y_i\supset C_i^+, C_i^-,F_i,E_i,G_i^-) \arrow[r, "g_i"] \arrow[d,"\pi_i"]&(W\supset C^+,C^-,F) \arrow[r]\arrow[d,"\tau"]&C\\
 (X_i\supset C_{X_i}^+,E_{X_i})\arrow[r, "f_i"] & (Z\supset C^\infty) 
\end{tikzcd}
\]

\begin{lemma}
  $-K_{X_i}$ is nef and big but not ample.  
\end{lemma}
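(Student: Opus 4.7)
The plan is to compute $-K_{X_i}$ explicitly in terms of $f_i^\ast(-K_Z)$ and then exploit the fact that $-K_Z$ is ample (since $-K_Z\qlin C^\infty$ and $C^\infty$ is an ample Weil divisor on the projective cone $Z$). Because $f_i\colon X_i\to Z$ is a birational contraction with single exceptional divisor $E_{X_i}$, one can write
$$K_{X_i}=f_i^\ast K_Z+a\,E_{X_i}$$
for a unique $a\in \bQ$. The whole proof reduces to showing $a=0$, after which $-K_{X_i}=f_i^\ast(-K_Z)$ is manifestly the pullback of an ample divisor under a non-trivial birational morphism, hence nef and big but not ample.

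To pin down $a$, I would pull everything back to the common resolution $Y_i$. Since $\tau\colon W\to Z$ is the minimal resolution of the simple elliptic singularity at the vertex (with exceptional curve $C^-$, which is a smooth elliptic curve of genus $1$), the discrepancy of $C^-$ is $-1$, so
$$K_W=\tau^\ast K_Z-C^-.$$
The blow up formula gives $K_{Y_i}=g_i^\ast K_W+E_i$, and combined with $g_i^\ast C^-=E_i+G_i^-$ this yields
$$K_{Y_i}=g_i^\ast\tau^\ast K_Z-G_i^-.$$
On the other hand, $G_i^-$ is itself a smooth elliptic curve with $(G_i^-)^2=-4$, so $\pi_i\colon Y_i\to X_i$ contracts it to a simple elliptic singularity and, by the same discrepancy-$(-1)$ reasoning (or directly from adjunction $K_{Y_i}\cdot G_i^-=-(G_i^-)^2=4$ and the fact that $\pi_i^\ast K_{X_i}\cdot G_i^-=0$), we have
$$K_{Y_i}=\pi_i^\ast K_{X_i}-G_i^-.$$

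Now apply $\pi_i^\ast$ to the assumed expression $K_{X_i}=f_i^\ast K_Z+a\,E_{X_i}$ and use the formula $\pi_i^\ast E_{X_i}=E_i+\tfrac14 G_i^-$ established in the excerpt:
$$\pi_i^\ast K_{X_i}=g_i^\ast\tau^\ast K_Z+a\bigl(E_i+\tfrac14 G_i^-\bigr).$$
Comparing with $\pi_i^\ast K_{X_i}=K_{Y_i}+G_i^-=g_i^\ast\tau^\ast K_Z$ forces $a(E_i+\tfrac14 G_i^-)=0$, hence $a=0$. Therefore $K_{X_i}=f_i^\ast K_Z$, and $-K_{X_i}=f_i^\ast(-K_Z)$.

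Finally, since $-K_Z$ is ample, its pullback $-K_{X_i}$ is nef and big; and it fails to be ample because $f_i$ contracts the curve $E_{X_i}$, on which $-K_{X_i}\cdot E_{X_i}=f_i^\ast(-K_Z)\cdot E_{X_i}=0$, violating Nakai--Moishezon. The only delicate point in the argument is the bookkeeping of the two discrepancy computations (at the vertex of $Z$ and at the curve $G_i^-$ on $X_i$); both rely on exactly the same fact that a smooth elliptic curve contracted on a smooth surface produces a simple elliptic singularity with log discrepancy zero, so no serious obstacle is expected.
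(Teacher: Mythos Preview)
Your proposal is correct and follows essentially the same approach as the paper: write $K_{X_i}=f_i^\ast K_Z+aE_{X_i}$, pull everything back to $Y_i$ via the relations $K_W+C^-=\tau^\ast K_Z$, $K_{Y_i}=g_i^\ast K_W+E_i$, $K_{Y_i}+G_i^-=\pi_i^\ast K_{X_i}$, and $g_i^\ast C^-=E_i+G_i^-$, deduce $a=0$, and conclude that $-K_{X_i}=f_i^\ast(-K_Z)$ is nef and big but vanishes on $E_{X_i}$. Your write-up is in fact a bit more explicit than the paper's (which simply lists the four relations and asserts $m=0$), but the content is identical.
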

\begin{proof}
Assume that
$$K_{X_i} = f_i^*K_Z + mE_{X_i}$$
for some $m \in \bQ$. Then, together with the following relations:
    $$K_W+C^-=\tau^*K_Z,$$
    $$K_{Y_i}=g_i^*K_W+E_i,$$
    $$K_{Y_i}+G_i^-=\pi_i^*K_{X_i},$$
    $$g_i^*C^-=E_i+G_i^-,$$
    we deduce that $m = 0$, hence $-K_{X_i} = -f_i^*K_Z$ is nef and big. Moreover, since
$-K_{X_i} \cdot E_{X_i} = 0$,
it follows that $-K_{X_i}$ is not ample.
\end{proof}

\begin{lemma}
     $N_i:=C_{X_i}^+-\frac{1}{2} E_{X_i}$ is ample on $X_i$.
\end{lemma}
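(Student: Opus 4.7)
My plan is to apply the Nakai--Moishezon criterion to the $\bQ$-Cartier divisor $N_i$ on $X_i$, by pulling everything back to the smooth model $\pi_i\colon Y_i\to X_i$ and rewriting $\pi_i^{*}N_i$ as an effective $\bQ$-divisor supported on three curves whose intersection numbers are already recorded in the excerpt.

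First, since $C^{+}$ and $C^{-}$ are disjoint sections of the $\bP^{1}$-bundle $W\to C$, we have $C_i^{+}\cdot E_i=C_i^{+}\cdot G_i^{-}=0$, so $\pi_i^{*}C_{X_i}^{+}=g_i^{*}\tau^{*}C^{\infty}=C_i^{+}$. Combining this with $\pi_i^{*}E_{X_i}=E_i+\tfrac{1}{4}G_i^{-}$ gives
$$\pi_i^{*}N_i \;=\; C_i^{+} - \tfrac{1}{2}E_i - \tfrac{1}{8}G_i^{-}.$$
Using the linear equivalence $C_i^{+} \sim C_i^{-}+3F_i = E_i+G_i^{-}+3F_i$, I rewrite this as the effective expression
$$\pi_i^{*}N_i \;\sim\; \tfrac{1}{2}E_i + \tfrac{7}{8}G_i^{-} + 3F_i. \qquad (\star)$$
A direct expansion against the intersection table $(C_i^{+})^{2}=3$, $E_i^{2}=-1$, $(G_i^{-})^{2}=-4$, $E_i\cdot G_i^{-}=1$, $C_i^{+}\cdot E_i=C_i^{+}\cdot G_i^{-}=0$ then yields $N_i^{2}=(\pi_i^{*}N_i)^{2}=3-\tfrac{1}{4}-\tfrac{1}{16}+\tfrac{1}{8}=\tfrac{45}{16}>0$, which gives the first Nakai condition and matches the expected volume.

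For the second condition, take any irreducible curve $D\subset X_i$ and let $\widetilde{D}$ be its strict transform on $Y_i$, so $\widetilde{D}\neq G_i^{-}$. Since $\pi_i^{*}N_i\cdot G_i^{-}=0$ (as may be checked against both forms above), the projection formula yields $N_i\cdot D=\pi_i^{*}N_i\cdot\widetilde{D}$. From $(\star)$ it suffices to show that at least one of $E_i\cdot\widetilde{D}$, $G_i^{-}\cdot\widetilde{D}$, $F_i\cdot\widetilde{D}$ is strictly positive. If $\widetilde{D}$ is not a component of a fiber of the induced morphism $Y_i\to C$, then $F_i\cdot\widetilde{D}\geq 1$ and therefore $\pi_i^{*}N_i\cdot\widetilde{D}\geq 3$. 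Otherwise $\widetilde{D}$ is a fiber component, and since $g_i$ alters only the fiber through $p_i$, the candidates are $F_i$, $\widetilde{F}_0=F_i-E_i$, or $E_i$, for which $(\star)$ returns $\tfrac{7}{8}$, $\tfrac{1}{2}$, and $\tfrac{3}{8}$ respectively.

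The main obstacle is the exhaustiveness of the enumeration in the last case: one must know that an irreducible curve $\widetilde{D}$ on $Y_i$ with $F_i\cdot\widetilde{D}=0$ is necessarily contained in a fiber of $Y_i\to C$, and that the only such irreducible components are the three listed above (in particular that the section $G_i^{-}$ does not appear among them). This follows from the structural fact that $Y_i$ arises from the $\bP^{1}$-bundle $W\to C$ by a single blow-up at a point of the negative section $C^{-}$, so the single non-reduced fiber of $Y_i\to C$ is $\widetilde{F}_0+E_i$. Once this is verified, Nakai--Moishezon concludes that $N_i$ is ample on $X_i$.
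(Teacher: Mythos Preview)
Your argument is correct and takes a genuinely different route from the paper. The paper writes $8\pi_i^{*}N_i = L_i + 2G_i^{-}$ where $L_i = 5C_i^{-}+24F_i-E_i$ is very ample by the cited result of Biancofiore on blow-ups of ruled surfaces (Theorem~\ref{thm:positivity on blow up ruled}); then $L_i\cdot C'>0$ and $G_i^{-}\cdot C'\geq 0$ for every irreducible $C'\neq G_i^{-}$ finishes the Nakai check in one line. Your approach avoids that external input by rewriting $\pi_i^{*}N_i$ in the effective form $(\star)$ and doing a direct case analysis (horizontal curve versus fiber component). This is more elementary and self-contained, at the cost of the explicit enumeration of fiber components; the paper's trick is shorter but needs the black-box very-ampleness criterion.

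One expository point: the sentence ``it suffices to show that at least one of $E_i\cdot\widetilde{D}$, $G_i^{-}\cdot\widetilde{D}$, $F_i\cdot\widetilde{D}$ is strictly positive'' is not literally correct as stated, since for $\widetilde{D}\in\{E_i,G_i^{-}\}$ one of the intersections is negative. What you actually use (and what your subsequent case split establishes) is that when $\widetilde{D}$ is horizontal and $\neq G_i^{-}$ it is distinct from all three support curves of $(\star)$, so all three intersections are $\geq 0$ and $F_i\cdot\widetilde{D}\geq 1$; the remaining vertical possibilities $F_i$, $\widetilde{F}_0$, $E_i$ are then handled by direct computation. With that sentence tightened, the proof is complete.
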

\begin{proof}

Let  
$$N_{Y_i} := \pi_i^*N_i =( C_i^-+3F_i) - \frac{1}{2} ( E_i + \frac{1}{4} (C_i^--E_i) ) = \frac{7}{8} C_i^- + 3F_i - \frac{3}{8} E_i.$$  
Note that  
$$N_i^2 =(N_{Y_i})^2= (C_{X_i}^+)^2 + \frac{1}{4} (E_{X_i})^2 = 3 + \frac{1}{4} (-\frac{3}{4} ) = \frac{45}{16} > 0.$$  
Thus, to show that $N_i$ is ample, it suffices to check that $N_{Y_i} \cdot C_i' > 0$ for any irreducible curve $C_i' \neq G_i^-$ on $Y_i$ by the Nakai-Moishezon criterion.

 Let $L_i:=5C_i^-+24F_i-E_i$, then $L_i$ is very ample by Theorem \ref{thm:positivity on blow up ruled}. Moreover, we have
 $$L_i+2G_i^-=5C_i^-+24F_i-E_i+2(C_i^--E_i)=8N_{Y_i}.$$
 It follows that 
 $$8N_{Y_i}\cdot C_i'= (L_i+2G_i^-)\cdot C_i'>0$$
 for any irreducible curve $C_i' \neq G_i^-$ on $Y_i$.
\end{proof}
\begin{rem}\label{rem: Cartier index C_X unbdd}

The Cartier index of $E_{X_i}$ is unbounded. 
Indeed, if it were bounded, then, since the Cartier index of $C_{X_i}^+$ is bounded, it follows that the Cartier index of $N_i = C_{X_i}^+ - \frac{1}{2} E_{X_i}$ is also bounded.
Since $N_i-K_{X_i}$ is ample,   by the effective base point free theorem \cite[Theorem 1.1]{fujinoEffectiveBasePoint2009} and the very ampleness lemma \cite[Lemma 7.1]{fujinoEffectiveBasepointfreeTheorem2017} for lc pairs,
 there exists $l\in \bN$ such that $lN_i$ is very ample for all $i$. Then $\pi_i^*(8lN_i)\sim 8lC_i^+-4lE_i-lG_i^-$ is base point free on $Y_i$, 
and thus $\cO_{Y_i}(8lC_i^+-4lE_i-lG_i^-)|_{G_i^-}\simeq \cO_{G_i^-}$. 
It follows that $\cO_{C^-}(3lp_i)\simeq \cO_{\bP^2}(l)|_{C^-}\simeq \cO_{C^-}(3lp_0)$ for all $i$,  which is a contradiction.
 Similarly, we can show that if we choose $p_i\in C^-\subset W$ such that  the divisor $p_i-p_0\in \Pic^0(C^-)$ is non-torsion, then no multiple of $E_{X_i}$ is Cartier.

   Note that in our example, $Z$ is fixed and $Y_i$ is bounded. However, since the Cartier index of $E_{X_i}$ is unbounded, by the proof of \cite[Claim 2.19.1]{kollarLecturesResolutionSingularities2007}, we deduce that $X_i$ cannot be obtained from $Z$ by blowing up a zero-dimensional subscheme  of bounded length. 
   Thus, we cannot apply the "sandwich" argument in \cite[Proof of Theorem 6.9]{alexeevBoundednessLogSurfaces1994} to get the boundedness of $X_i$.

   \end{rem}
   
Now,  $M_i := 2C_{X_i}^+ - \frac{1}{2}E_{X_i}$ is an ample $\bQ$-divisor on $X_i$ with  unbounded Cartier index. Then,  
$$ M_{Y_i} := \pi_i^*M_i = 2C_i^+ - \frac{1}{2} ( E_i + \frac{1}{4}G_i^- ) = \frac{1}{8} (16C_i^+ - 4E_i - G_i^-) $$  
is nef and big, and hence $16C_i^+ - 4E_i - G_i^-$ is a nef Cartier divisor on $Y_i$. Moreover, $(Y_i, G_i^- + M_{Y_i})$ is a generalised lc pair.  
Therefore, $(X_i, M_i)$ is a generalised lc surface with data $Y_i \to X_i$ and $M_{Y_i}$.
Moreover,  $K_{X_i}+M_i\qlin C_{X_i}^+ -\frac{1}{2}E_{X_i}$ is ample and 
$$\vol(K_{X_i}+M_i)=(C_{X_i}^+)^2+\frac{1}{4}(E_{X_i})^2=\frac{45}{16}.$$
Thus,
$$(X_i,M_i)\in\cP_{-\infty}\subset \cF_{glc}(2,\Phi,v)$$
	 for $\Phi=\{0,\frac{1}{8}\}$ and $v=\frac{45}{16}$. 

\begin{thm}\label{thm:P<0 unbdd}
    The set of   underlying varieties   appearing in $\cP_{-\infty}$ is not bounded.
\end{thm}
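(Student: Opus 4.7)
The plan is to follow the strategy of Theorem \ref{thm:P_0 unbdd}: organize the $X_i$ into a family of algebraic spaces over a base $S$ containing the projective ones as a countable dense subset, and then derive a contradiction from assuming these projective fibers are bounded. I would take $S := C^- \setminus \{p_0\}$ and blow up the diagonal $\Gamma = \{(p_s, s) : s \in S\} \subset W \times S$ to obtain a smooth projective family $g: \cY \to S$ with fiber $Y_s = \mathrm{Bl}_{p_s}(W)$. Let $\cG^- \subset \cY$ be the strict transform of $C^- \times S$, so that on each fiber $\cG^-_s = G_s^-$ has $(G_s^-)^2 = -4$. Since $\omega_C = \cO_C$ and the conormal sheaf $\cI/\cI^2 \simeq \cO_{\cG^-}(-\cG^-|_{\cG^-})$ restricts to a degree-$4$ line bundle on each fiber of $g_0 := g|_{\cG^-}: \cG^- \to S$, relative Serre and Kodaira vanishing verify hypotheses (1) and (2) of Artin's Theorem \ref{Thm:Arin contraction}, yielding a contraction $\pi: \cY \to \cX$ and a family $h: \cX \to S$ of algebraic spaces. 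By the construction of $M_i$ given earlier in this subsection together with the torsion argument of Remark \ref{rem: Cartier index C_X unbdd}, $\cX_s$ is projective if and only if $s \in S^\tau := \{s \in S : p_s - p_0 \text{ is torsion in } \Pic^0(C^-)\}$.

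Next, assume $\{\cX_s : s \in S^\tau\}$ is bounded, giving a projective family $h': \cX' \to T$ such that for every $s \in S^\tau$ there is $t(s) \in T$ with $\cX_s \cong \cX'_{t(s)}$; after replacing $T$ by the closure of $\{t(s)\}$, the set $\{t(s) : s \in S^\tau\}$ is dense in $T$. After shrinking $T$, take the simultaneous minimal resolution $\pi': \cY' \to \cX'$ with exceptional divisor $\cG'$ whose fibers $G'_t$ are elliptic curves. One would like to mimic the Hilbert-scheme matching of Theorem \ref{thm:P_0 unbdd} to identify $\cY' \to T$ with $\cY \to S$ over a common étale cover. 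The obstacle is that $H^1(\cY_s, \cO_{\cY_s}) = H^1(W, \cO_W) = 1$, so line bundles on fibers do not deform uniquely through étale base change as in that proof. My fix is to exploit the structural rigidity of $W$: on each $Y_s$ there is a canonical $(-1)$-curve, namely the one meeting $G_s^-$ in one point (as opposed to the $(-1)$-curve $F_s$, which is disjoint from $G_s^-$); this is the exceptional divisor $E_s$ of $Y_s \to W$. Contracting the corresponding canonical $(-1)$-curve fiberwise in $\cY' \to T$ yields a family $\cW' \to T$ whose fibers are all isomorphic to $W$. After an étale base change that trivializes the finite-type moduli of $\bP^1$-bundles over $C$ with a distinguished $(-3)$-section, $\cW' \to T$ becomes isomorphic to $W \times T$, and the center of the blow-up $\cY' \to \cW'$ gives a section $T \to W \times T$ whose image lies in $C^- \times T$, hence a morphism $\psi : T \to S$.

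With $\psi$ in hand, the families $\cY' \to T$ and $\cY \times_S T \to T$ become identified as blow-ups of $W \times T$ along the same section, so $\cX' \times_T R \simeq \cX \times_S R$ over a common étale cover $R$. Since $\cX' \to T$ is projective and the image of $\psi$ is a constructible subset of $S$ containing the dense set $S^\tau$, the image meets $S \setminus S^\tau$, producing a fiber that would be simultaneously projective (as a fiber of $\cX'$) and non-projective (by the first paragraph), a contradiction. The main obstacle is the rigidification in the second paragraph: replacing the line-bundle-extension step of Theorem \ref{thm:P_0 unbdd} with the canonical $(-1)$-curve contraction requires both verifying that this relative contraction exists in families (via relative Castelnuovo, using that the natural section of relative $(-1)$-curves meeting $\cG'$ exists after étale base change) and controlling the moduli of the resulting $\bP^1$-bundles over $C$ with a fixed $(-3)$-section, so that $\cW' \to T$ can be trivialized to $W \times T$ after a further étale base change.
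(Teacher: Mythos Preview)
Your approach is a genuine alternative to the paper's, but it is far more elaborate than needed, and the paper's argument is worth knowing because it is a five-line direct computation.

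The paper does \emph{not} use the family/Hilbert-scheme machinery of Theorem~\ref{thm:P_0 unbdd} here. Instead it argues directly that $H_i^2\to\infty$ for any sequence of very ample divisors $H_i$ on $X_i$. The point is that $\Pic(Y_i)=\bZ C_i^+\oplus\bZ F_i\oplus\bZ E_i$ has rank only $3$, so one can write $\pi_i^*H_i\sim a_iC_i^++b_iF_i+c_iE_i$ and read off from $\pi_i^*H_i\cdot G_i^-=0$, $\pi_i^*H_i\cdot E_i>0$, $\pi_i^*H_i\cdot C_i^+>0$ that $b_i=-c_i>0$ and $3a_i+b_i>0$; then $H_i^2=(a_i+b_i)(3a_i-b_i)$. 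Base-point-freeness forces $(\pi_i^*H_i)|_{G_i^-}\simeq\cO_{G_i^-}$, which unwinds to $\cO_{C^-}(3b_i(p_0-p_i))\simeq\cO_{C^-}$, so $b_i\to\infty$ with the torsion order $n_i$, and hence $H_i^2\to\infty$. That is the entire proof.

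The reason this elementary route was unavailable for $\cP_0$ is that there $\Pic(Y_i)$ has rank $12$, and the analogous linear-algebra constraints only pin down $N_i\cdot(K_{X_i}+M_i)$ (Proposition~\ref{prop: M unbdd}), not $N_i^2$; that is why the family argument was introduced. You have imported machinery designed to overcome an obstruction that simply does not arise in the weak-Fano construction.

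As for your proposal itself: the outline is plausible but has real gaps you would have to close. You correctly flag that $h^1(\cO_{Y_s})=1$ breaks the line-bundle-extension step, and your fix via the canonical $(-1)$-curve is reasonable, but note that what you call ``$F_s$'' is not a $(-1)$-curve (it is the pullback of a fiber, with square $0$); the second $(-1)$-curve is the \emph{strict transform} of the fiber through $p_s$. More substantively, you need to argue that on the \emph{generic} fiber $\cY'_t$ (not just those with $t=t(s)$) the $(-1)$-curve meeting $\cG'_t$ exists and deforms, that its contraction yields a $\bP^1$-bundle over an elliptic curve isomorphic to $C$ (not merely some genus-one curve), and that the resulting family of ruled surfaces can be trivialized to $W\times T$ after \'etale base change. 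Each of these is doable but none is automatic, and together they make your argument an order of magnitude longer than the paper's.
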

\begin{proof}
    We will show that  there is no fixed $r \in \bN$ and a very ample divisor $H_i$ on $X_i$ satisfying $H_i^2\leq r$ for all $i$.  Thus, $X_i$ is not bounded.

    Let $H_{Y_i}:=\pi_i^*H_i$, then $H_{Y_i}$ is a big and base point free Cartier divisor on $Y_i$, which contracts only $G_i^-$.
    Since $\Pic(Y_i) = \mathbb{Z} C_i^+ \oplus \mathbb{Z} F_i \oplus \mathbb{Z} E_i$,  
write  
$$ H_{Y_i} \sim a_iC_i^+ + b_iF_i + c_iE_i, $$  
then we have  
$$ H_{Y_i} \cdot G_i^- = 0, \quad H_{Y_i} \cdot E_i > 0, \quad \text{and} \quad H_{Y_i} \cdot C_i^+ > 0, $$  
which implies  
\begin{equation}  
b_i = -c_i, \quad -c_i > 0, \quad \text{and} \quad 3a_i + b_i > 0.  
\label{eq:bi_ci_ai}
\end{equation}  
Moreover,  we have  
\begin{align}  
H_i^2 &= (a_iC_i^+ + b_iF_i - b_iE_i)^2 \notag \\  
&= (a_iC_i^+ + b_iF_i)^2 + b_i^2E_i^2 \notag \\  
&= 3a_i^2 + 2a_ib_i - b_i^2 \notag \\  
&= (a_i + b_i)(3a_i - b_i) > 0.  \label{eq:Hi_squared}
\end{align} 
Combining (\ref{eq:bi_ci_ai}) and (\ref{eq:Hi_squared}), we obtain $a_i + b_i > 0$ and $3a_i - b_i > 0$. For $H_{Y_i}$ to be base point free on $Y_i$, we must have $H_{Y_i}|_{G_i^-} \simeq \mathcal{O}_{G_i^-}$, which gives $\mathcal{O}_{Y_i}(3(a_iC_i^+ + b_iF_i - b_iE_i))|_{G_i^-} \simeq \mathcal{O}_{G_i^-}$, and hence
$$\mathcal{O}_{C^-}(3b_i (p_0 - p_i)) \simeq \mathcal{O}_{C^-}.$$ Thus, we have $b_i \to +\infty$ as $i \to +\infty$. Therefore, $H_i^2 \to +\infty$ as $i \to +\infty$.
   \end{proof}
   
\begin{rem}
    If we choose $p_i\in C^-\subset W$ such that  the divisor $p_i-p_0\in \Pic^0(C^-)$ is non-torsion, then $X_i$ is not projective.
    Indeed, if $X_i$ is projective, then there exists a very ample divisor $H_i$ on $X_i$. 
Hence, by the proof of Theorem \ref{thm:P<0 unbdd}, we have  
 $\mathcal{O}_{C^-}(3b_i (p_0 - p_i)) \simeq \mathcal{O}_{C^-}$.  
It follows that $p_i - p_0 \in \Pic(C^-)$ is torsion, which leads to a contradiction.

\end{rem}

\begin{proof}[Proof of Theorem \ref{thm: unbdd}]
    It follows from Theorem \ref{thm:P_0 unbdd}, Theorem \ref{thm:P_1 unbdd}, Theorem \ref{thm:P_2 unbdd} and Theorem \ref{thm:P<0 unbdd}
    
\end{proof}


		\bibliographystyle{alphaurl}
		\bibliography{glc-surface}
		
	\end{document}